\newtheorem{theorem}{Theorem}
\newtheorem{lemma}[theorem]{Lemma}
\newtheorem{conjecture}[theorem]{Conjecture}
\DeclareMathOperator{\pw}{pw}
\DeclareMathOperator{\tw}{tw}
\DeclareMathOperator{\bw}{bw}
\DeclareMathOperator{\circumference}{circumference}
\DeclareMathOperator{\cocircumference}{cocircumference}
\title{Pathwidth vs cocircumference}
\author[
  M.~Briański \and
  G.~Joret \and
  M.~T.~Seweryn
]{
  Marcin Briański \and
  Gwenaël Joret \and
  Michał T. Seweryn
}
\address[M.~Briański]{
  Theoretical Computer Science Department\\ 
  Faculty of Mathematics and Computer Science\\ 
  Jagiellonian University\\ 
  Kraków, Poland
}
\address[G.~Joret, M.~T.~Seweryn]{
  Computer Science Department\\
  Université libre de Bruxelles\\
  Brussels\\
  Belgium
}
\email{marcin.brianski@doctoral.uj.edu.pl}
\email{gwenael.joret@ulb.be}
\email{michal.seweryn@ulb.be}
\thanks{M.\ Briański is partially supported by the Polish National Science Centre grant 2019/34/E/ST6/00443. G.\ Joret and M.T.\ Seweryn are supported by a PDR grant from the Belgian National Fund for Scientific Research (FNRS)}
\date{\today}
\begin{document}

\begin{abstract}
The {\em circumference} of a graph $G$ with at least one cycle is the length of a longest cycle in $G$.  
A classic result of Birmelé (2003) states that the treewidth of $G$ is at most its circumference minus $1$. 
In case $G$ is $2$-connected, this upper bound also holds for the pathwidth of $G$; in fact, even the treedepth of $G$ is upper bounded by its circumference (Briański, Joret, Majewski, Micek, Seweryn, Sharma; 2023). 
In this paper, we study whether similar bounds hold when replacing the circumference of $G$ by its {\em cocircumference}, defined as the largest size of a {\em bond} in $G$, an inclusion-wise minimal set of edges $F$ such that $G-F$ has more components than $G$. 
In matroidal terms, the cocircumference of $G$ is the circumference of the bond matroid of $G$. 

Our first result is the following `dual' version of Birmelé's theorem: The treewidth of a graph $G$ is at most its cocircumference.
Our second and main result is an upper bound of $3k-2$ on the pathwidth of a $2$-connected graph $G$ with cocircumference $k$. 
Contrary to circumference, no such bound holds for the treedepth of $G$.  
Our two upper bounds are best possible up to a constant factor. 
\end{abstract}

\maketitle

\section{Introduction}

The {\em circumference} of a graph $G$ with at least one cycle is the length of a longest cycle in $G$.  
The following is a well-known upper bound on the treewidth of graphs. 

\begin{theorem}[Birmelé~\cite{B03}]
\label{thm:tw_circ}
Every graph with at least one cycle has treewidth at most its circumference minus $1$. 
\end{theorem}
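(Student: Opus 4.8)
The plan is to read off a tree decomposition of small width directly from a depth-first search tree. Since the treewidth of a graph is the maximum of the treewidths of its connected components, and an acyclic component has treewidth at most $1$, I may assume that $G$ is connected. Fix a DFS spanning tree $T$ of $G$ rooted at a vertex $r$, and recall the defining property of such a tree: the two ends of every edge of $G$ are comparable in the ancestor order of $T$. For a vertex $v$, let $T_v$ denote the subtree of $T$ rooted at $v$, and set
\[
  B_v \;=\; \{v\}\ \cup\ \{\, w : w \text{ is an ancestor of } v \text{ having a neighbour in } T_v \,\}.
\]

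First I would verify that $\bigl(T,(B_v)_{v\in V(G)}\bigr)$ is a tree decomposition of $G$. Each vertex lies in its own bag. For an edge $uv$ with $u$ an ancestor of $v$ (this covers tree edges and back edges alike), $u$ has the neighbour $v\in T_v$, so $\{u,v\}\subseteq B_v$. Finally, the bags containing a fixed vertex $w$ induce a subtree of $T$: if $w\in B_v$ then $v$ lies in $T_w$, and if moreover $x\in T_v$ is a neighbour of $w$, then $x\in T_{v'}$ for every $v'$ on the $T$-path from $v$ up to $w$, so $w\in B_{v'}$ for all such $v'$.

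The heart of the matter is bounding the bag sizes by the circumference. Fix $v$, and let $w_1,\dots,w_m$ be the proper ancestors of $v$ belonging to $B_v$, ordered from the one nearest the root down to $w_m$, the parent of $v$; thus $|B_v|=m+1$. If $m\le 1$ then $|B_v|\le 2$, which is at most the circumference (a graph containing a cycle contains one of length at least $3$). If $m\ge 2$, choose a neighbour $x\in T_v$ of the topmost ancestor $w_1$. Because $w_1\prec w_2\prec\dots\prec w_m\prec v\preceq x$ in $T$, all of $w_1,\dots,w_m,v$ lie on the $T$-path $P$ from $w_1$ to $x$, so $P$ has at least $m$ edges; in particular $w_1$ and $x$ differ in depth by at least $m\ge 2$, so $w_1x$ is not an edge of $T$. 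Hence $P$ together with the edge $w_1x$ forms a cycle of $G$ of length at least $m+1$, and therefore the circumference is at least $|B_v|$. So the tree decomposition above has width at most the circumference minus $1$, as claimed.

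The one genuine obstacle is this last step — producing a cycle through all the ``active'' ancestors stored in a single bag — and the DFS comparability property is exactly what forces the tree path from $w_1$ to $x$ to pass through every $w_i$ and through $v$, so no rerouting or extra connectivity hypotheses are needed.
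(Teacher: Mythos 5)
Your proof is correct. The paper itself does not prove \cref{thm:tw_circ} --- it is cited from Birmelé --- so strictly there is no in-paper proof to compare against; but the paper does prove the cocircumference analogue, \cref{TreewidthVsCocircumference}, by a closely related DFS argument, and it is instructive to compare. Both proofs build a tree decomposition along a DFS tree $T$, exploiting that every edge joins an ancestor to a descendant, but the bag definitions are mirror images of each other, each tailored to the quantity it must bound. The paper's bag $X_u$ consists of the parent of $u$ together with the \emph{descendants} of $u$ adjacent to a proper ancestor of $u$; those descendants inject into the fundamental bond cut off by the tree edge above $u$, giving $|X_u|-1 \le \cocircumference(G)$. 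Your bag $B_v$ consists of $v$ together with the \emph{ancestors} of $v$ having a neighbour in the subtree $T_v$; these ancestors, together with $v$, all lie on the $T$-path from the topmost such ancestor $w_1$ down to a neighbour $x\in T_v$ of $w_1$, and that path plus the chord $w_1x$ is a cycle of length at least $|B_v|$. The one point requiring care in your version --- that $w_1x$ really is a chord and not a tree edge, so that a cycle genuinely exists --- you handle correctly by noting the depth gap is at least $m\ge 2$, with the case $m\le 1$ disposed of trivially since the circumference is at least $3$.
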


For $2$-connected graphs, it was recently strengthened as follows. 

\begin{theorem}[Briański, Joret, Majewski, Micek, Seweryn, and Sharma~\cite{TreedepthVsCircumference}]
\label{thm:td_circ}
Let $G$ be a $2$-connected graph with treedepth $d$ and circumference $k$. 
Then every edge of $G$ is included in a cycle of length at least $d$. 
Thus, $G$ has treedepth at most $k$, and in particular pathwidth at most $k-1$. 
\end{theorem}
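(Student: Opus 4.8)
The plan is to establish the structural claim---in a $2$-connected graph $G$, every edge lies on a cycle with at least $d$ vertices, where $d$ denotes the treedepth of $G$---and then read off the numerical statements. The latter are immediate: $G$ is $2$-connected, so it has at least three vertices and hence an edge, and that edge lies on a cycle of length at least $d$, so $\circumference(G)\ge d$, i.e.\ $d\le k$; finally $\pw(G)\le d-1$ holds for every graph, so $\pw(G)\le k-1$. Thus the whole content is in the structural claim, which I would first rephrase: since a $2$-connected graph is $2$-edge-connected, $G-uv$ is connected for every edge $uv$, so a cycle through $uv$ with at least $d$ vertices is the same thing as a $u$--$v$ path in $G-uv$ on at least $d$ vertices, and it suffices to produce one of those.

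I would prove this by induction on $|V(G)|$, using the treedepth recursion---the treedepth of a connected graph $H$ equals $1$ plus the minimum, over $x\in V(H)$, of the treedepth of $H-x$---together with the observation that, since $G$ is $2$-connected, $G-x$ is connected for every $x$, so this recursion never branches into several components. The case $d\le 3$ is trivial, as in a $2$-connected graph every edge lies on a cycle and every cycle has at least three vertices; this also settles the base case $G=K_3$. For the inductive step I would pick a vertex $x$ with the treedepth of $G-x$ equal to $d-1$, preferring $x\notin\{u,v\}$; even better, if $G$ has a degree-$2$ vertex $x\notin\{u,v\}$, suppress it---delete $x$ and add the edge between its two neighbours $a,b$ if it is not already present---obtaining a smaller $2$-connected graph $G'$ that still contains the edge $uv$ and whose treedepth lies between $d-1$ and $d$. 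The induction hypothesis applied to $G'$ yields a cycle through $uv$ of length at least $d-1$; if this cycle uses the (possibly newly added) edge $ab$, rerouting it through $a$--$x$--$b$ restores the missing vertex and reaches length $d$.

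The main obstacle, which I expect to be the real work, is twofold. First, the cycle produced by the induction hypothesis need not pass through $ab$, so the gain of a vertex is not automatic; I anticipate this forces a strengthening of the induction hypothesis, for instance to the statement that in a $2$-connected graph any two edges lie on a common cycle of length at least the treedepth (equivalently, a cycle through one prescribed edge and one prescribed vertex), which is precisely what lets one simultaneously locate $uv$ and force the detour through $x$. Second, if $G$ has no degree-$2$ vertex outside $\{u,v\}$---in particular when $G$ is $3$-connected---there is nothing to suppress, and deleting a vertex $x$ leaves $G-x$ connected but in general not $2$-connected, so the induction hypothesis does not apply to it verbatim. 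Here I would work with the block-cut tree of $G-x$: identify the block $B$ containing $uv$, use that $x$ has at least two neighbours in $G-x$ to route a cycle through $B$ and then back through $x$, and argue that this single extra vertex $x$ exactly offsets the drop from $d$ to $d-1$. Making these compensations yield an honest $+1$ rather than $+0$ is the delicate accounting.

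An alternative I would keep in reserve is the extremal approach: take a longest $u$--$v$ path $Q$ in $G-uv$; if $Q$ has at least $d$ vertices we are done, and otherwise---using the maximality of $Q$ together with $2$-connectivity, so that each component of $G-V(Q)$ attaches to $Q$ within a bounded interval, in the familiar style of longest-path arguments---one should be able to build an elimination forest of $G$ of height less than $d$, contradicting that the treedepth of $G$ is $d$. Either way, the crux is the same: keeping $2$-connectivity (or a weakening of it strong enough to still drive the induction) alive through the reduction while accounting exactly for the one vertex that the treedepth recursion strips away.
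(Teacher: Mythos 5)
The paper does not prove Theorem~\ref{thm:td_circ}; it is cited as a known result from~\cite{TreedepthVsCircumference}, so there is no in-paper proof to compare your attempt against. Evaluating the proposal on its own merits: it is a plan with gaps that you yourself flag, not a complete argument.

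Your framing is sound---reducing to long $u$--$v$ paths in $G-uv$, using the treedepth recursion, and reading off the numerical consequences are all fine---but the two ``obstacles'' you name are precisely where the theorem's difficulty lives, and neither is resolved. First, the proposed strengthening (any two edges of a $2$-connected graph lie on a common cycle of length at least the treedepth) is stated as an anticipated fix, not proved, and it is a strictly stronger claim; reorganizing the induction around it does not by itself dodge the accounting problem, since after suppressing a degree-$2$ vertex or deleting a vertex you must still show the stronger hypothesis holds in the smaller graph and produces a cycle that can be rerouted to regain the lost vertex. Second, in the $3$-connected case there is no vertex to suppress, and your plan to delete some $x$ and ``work with the block-cut tree of $G-x$'' is underspecified: $G-x$ can have many cut-vertices, the block $B$ containing $uv$ may have treedepth strictly less than $d-1$ (treedepth is not additive over blocks), and the two neighbours of $x$ that $2$-connectivity guarantees on any cycle may both land inside $B$, in which case rerouting through $x$ buys you nothing. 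You correctly call this ``the delicate accounting,'' but no argument is given for it. A further small imprecision: the treedepth recursion fails to branch only at the first deletion; after removing $x$, subsequent deletions in $G-x$ can disconnect the graph, so the recursion does branch below the top level, and an induction that tracks treedepth through repeated deletions must cope with that.

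In short, the proposal correctly locates the hard parts of the proof, but leaves both of them open, so it does not yet constitute a proof of the theorem.
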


(Note that $2$-connectivity is essential in \cref{thm:td_circ}, even if we only wish to bound the pathwidth: 
A complete ternary tree of height $h$ has pathwidth $h$, so the union of such a tree with a triangle 
has pathwidth $h$ but circumference $3$.) 

In this paper, we study whether similar upper bounds hold when replacing the circumference of $G$ by its {\em cocircumference}, defined as the largest size of a {\em bond} in $G$, an inclusion-wise minimal set of edges $F$ such that $G-F$ has more components than $G$. 
We remark that the cocircumference of $G$ is the cocircumference of the cycle matroid of $G$, or equivalently, the circumference of its dual matroid, the bond matroid of $G$. 
Thus, in this sense we are looking for `dual' versions of Theorems~\ref{thm:tw_circ} and~\ref{thm:td_circ}. 

Our first result shows that `circumference minus \(1\)' can be replaced with `cocircumference' in Theorem~\ref{thm:tw_circ}:

\begin{theorem}\label{TreewidthVsCocircumference}
 Every graph with at least one edge has treewidth at most its cocircumference. 
\end{theorem}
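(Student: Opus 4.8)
The plan is to mimic Birmelé's depth-first-search proof of \cref{thm:tw_circ}, but with the fundamental \emph{bonds} of tree edges playing the role that the fundamental \emph{cycles} of non-tree edges play in the classical argument. First I would reduce to the connected case: a bond of a graph lies entirely inside one component, so $\cocircumference(G)$ is the maximum of $\cocircumference(C)$ over the components $C$ of $G$, and likewise $\tw(G)$ is the maximum of $\tw(C)$ over components (a component with no edge has treewidth $0$). So assume $G$ is connected with at least one edge, fix a spanning tree $T$ produced by a depth-first search from a root $r$, and recall the defining property of DFS trees of undirected graphs: every non-tree edge is a \emph{back edge}, joining a vertex to one of its ancestors in $T$. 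For a vertex $v$ write $T_v$ for the set of vertices in the subtree of $T$ rooted at $v$ (so $v\in T_v$), and for $v\neq r$ let $e_v$ be the tree edge joining $v$ to its parent.

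The key observation is the following. For $v\neq r$, let $D_v$ be the fundamental bond of $e_v$, i.e.\ the set of edges of $G$ with exactly one endpoint in $T_v$. Both $T_v$ and $V(G)\setminus T_v$ induce connected subgraphs of $G$, since each is spanned by a subtree of $T$ (the component of $T-e_v$ containing $v$, respectively the one containing $r$); hence $D_v$ is genuinely a bond of $G$, and therefore $|D_v|\le \cocircumference(G)$. Moreover, using the no-cross-edge property one checks that $D_v=\{e_v\}\cup\{\text{back edges with one endpoint in }T_v\text{ and the other a proper ancestor of }v\}$: indeed, a tree edge other than $e_v$ has both endpoints on the same side of the cut, and a back edge $xy$ with $y$ an ancestor of $x$ crosses the cut precisely when $x\in T_v$ and $y$ is a proper ancestor of $v$. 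In particular the number of distinct proper ancestors of $v$ that receive a back edge from $T_v$ is at most $|D_v|-1\le \cocircumference(G)-1$.

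Now I would take the tree decomposition of $G$ with underlying tree $T$, bag $B_r:=\{r\}$, and for $v\neq r$
\[
 B_v:=\{v,\operatorname{parent}(v)\}\cup\{w: w\text{ is a proper ancestor of }v\text{ and some back edge joins }T_v\text{ to }w\}.
\]
By the key observation, $|B_v|\le 2+(\cocircumference(G)-1)=\cocircumference(G)+1$, so the width is at most $\cocircumference(G)$. It remains to verify that this is a valid tree decomposition. Each tree edge $v\operatorname{parent}(v)$ is covered by $B_v$; each back edge $xw$ with $w$ an ancestor of $x$ is covered by $B_x$, since then $w$ is a proper ancestor of $x$ receiving a back edge from $T_x\ni x$. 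Finally, for a fixed vertex $u$ the set $\{v: u\in B_v\}$ equals $\{u\}\cup\{\text{children of }u\}\cup\{v: u\text{ is a proper ancestor of }v\text{ and some back edge joins }T_v\text{ to }u\}$, and this induces a subtree of $T$: it contains $u$ and all children of $u$, and if $v$ lies in the last set with $\operatorname{parent}(v)\neq u$ then $\operatorname{parent}(v)$ also lies in it, because the same back edge joins $T_{\operatorname{parent}(v)}\supseteq T_v$ to $u$ and $u$ is still a proper ancestor of $\operatorname{parent}(v)$; so the set is closed under taking parents down to the children of $u$, hence connected. This produces a tree decomposition of width at most $\cocircumference(G)$, as required.

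The conceptual heart of the argument is the single line ``$|D_v|\le\cocircumference(G)$'': once one sees that fundamental bonds of DFS tree edges should replace fundamental cycles of DFS back edges, the rest is a faithful transcription of Birmelé's proof. The one place demanding genuine care is the validity check in the last paragraph — in particular showing that each $\{v:u\in B_v\}$ is a subtree of $T$ — which is exactly where the precise definition of $B_v$ (including $\operatorname{parent}(v)$, and using ``back edge from $T_v$'' rather than ``back edge at $v$'') has to be chosen correctly, just as in the classical proof.
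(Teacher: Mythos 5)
Your proof is correct, and its conceptual core coincides with the paper's: build a tree decomposition on the DFS tree, and bound each bag by the size of the fundamental bond of the tree edge from the node to its parent. The bag definitions, however, are genuinely different. The paper's bag $X_u$ is the parent of $u$ together with all \emph{descendants} of $u$ that have a back edge to a proper ancestor of $u$ — it collects the \emph{lower} endpoints of the bond edges. Your bag $B_v$ is $\{v,\operatorname{parent}(v)\}$ together with all \emph{proper ancestors} of $v$ that receive a back edge from $T_v$ — it collects the \emph{upper} endpoints. These really are different decompositions: on a $4$-cycle $rabc$ with DFS tree the path $rabc$ and back edge $cr$, the paper's bag at $a$ is $\{r,a,c\}$ while yours is $\{a,r\}$. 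Both constructions satisfy ``bag size $\le$ fundamental-bond size $+\,1$'', and the verification of the tree-decomposition axioms takes comparable effort, with the edge-coverage bookkeeping landing in different places: you cover a back edge $xw$ (with $w$ an ancestor of $x$) in the bag of the lower endpoint $x$, whereas the paper covers it in the bag of the child of $w$ that is an ancestor of $x$. Yours is arguably the more classical ``ancestor-bag'' flavour of DFS-based tree decompositions; either choice works equally well here, and neither gives an obvious saving over the other.
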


Our second result, which is the main result of this paper, is the following dual version of Theorem~\ref{thm:td_circ}.  

\begin{theorem}\label{PathwidthVsCocircumfrence}
  Let \(G\) be a \(2\)-connected graph with pathwidth $p$ and cocircumference $k$. 
  Then each edge of \(G\) is contained in a bond \(F\) such that \(p \le 3|F| - 2\).
  In particular, $G$ has pathwidth at most $3k-2$. 
\end{theorem}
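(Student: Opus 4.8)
The plan is to produce, for each edge $e = st$ of $G$, a path decomposition of $G$ whose width is bounded by the size of a single bond through $e$. Since $G$ is $2$-connected it admits an \emph{$st$-numbering} $v_1 = s, v_2, \dots, v_n = t$: an ordering of $V(G)$ in which every internal vertex has both an earlier and a later neighbor, equivalently a bipolar orientation of $G$ with source $s$ and sink $t$; this is a classical fact. Writing $L_i = \{v_1, \dots, v_i\}$ and $R_i = \{v_{i+1}, \dots, v_n\}$, the crucial feature---and the one place where $2$-connectivity is used---is that $G[L_i]$ and $G[R_i]$ are both connected for every $i \in \{1, \dots, n-1\}$, since each successive vertex attaches to $G[L_i]$ through its earlier neighbor and, reading the order in reverse, to $G[R_i]$ through its later neighbor. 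Hence $F_i := E(L_i, R_i)$ is a bond of $G$ for every such $i$, and it contains $e$ because $s \in L_i$ and $t \in R_i$.

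The second step converts this into a path decomposition. For $i \in \{0, 1, \dots, n\}$ put $S_i = \{\,u \in L_i : u \text{ has a neighbor in } R_i\,\}$, and define the bags $X_i := \{v_i\} \cup S_{i-1}$ for $i \in \{1, \dots, n\}$. I would verify the three path-decomposition axioms: $v_i \in X_i$, so every vertex is covered; an edge $v_a v_b$ with $a < b$ is covered by $X_b$, since $v_b$ is a neighbor of $v_a$ lying in $R_{b-1}$ and hence $v_a \in S_{b-1}$; and each vertex $v_i$ occupies a contiguous set of bags, since for $j > i$ one has $v_i \in X_j$ exactly when $v_i$ has a neighbor in $R_{j-1}$, and this condition, once false, stays false as $j$ grows. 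For the width, every vertex of $S_{i-1}$ is the $L_{i-1}$-endpoint of a distinct edge of $F_{i-1}$, so $|X_i| = 1 + |S_{i-1}| \le 1 + |F_{i-1}| \le 1 + k$. Choosing $i$ so as to maximize $|S_{i-1}|$ and setting $F := F_{i-1}$, we obtain a bond through $e$ with $p \le \max_j |X_j| - 1 = |S_{i-1}| \le |F|$---even stronger than the asserted $p \le 3|F| - 2$---and taking the maximum over all edges $e$ gives $p \le k$.

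The step I expect to require the most care is the first one, namely that $G[L_i]$ and $G[R_i]$ are connected: this is precisely what breaks for general graphs (where $E(L_i, R_i)$ need not be a bond at all), so it must genuinely exploit $2$-connectivity, and if the argument sketched above conceals a gap it should be here, or in the appeal to the existence of $st$-numberings. If one prefers to avoid that existence result and instead construct the required order by hand---say by threading the ears of an open ear decomposition of $G$ into the linear order one at a time while keeping both sides connected---it is plausible that a bounded amount of slack is introduced per ear, which would account for a final bound of the announced form $3|F| - 2$ rather than the cleaner $p \le |F|$.
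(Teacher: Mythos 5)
Your proof is correct, and in fact it establishes a stronger conclusion than the paper does: you obtain a bond $F$ through each edge $st$ with $p \le |F|$, hence $\pw(G) \le \cocircumference(G)$, whereas the paper only proves $p \le 3|F| - 2$. Your worry in the last paragraph is unfounded. The existence of an $st$-numbering of a $2$-connected graph for any chosen edge $st$ is the theorem of Lempel, Even and Cederbaum, and the connectivity of $G[L_i]$ and $G[R_i]$ for $1 \le i \le n-1$ is exactly the standard consequence you sketch: every vertex other than $s$ has an earlier neighbour, so the $L_i$'s build up connectedly from $s$, and symmetrically every vertex other than $t$ has a later neighbour, so the $R_i$'s are connected too. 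Consequently each $F_i$ is a genuine bond (both sides induce connected subgraphs), it contains $st$, and your bag sequence $X_i = \{v_i\} \cup S_{i-1}$ is precisely the standard path decomposition of width $\max_i |S_{i-1}|$ associated with a linear layout; the injection sending each $u \in S_{i-1}$ to one of its edges into $R_{i-1}$ gives $|S_{i-1}| \le |F_{i-1}|$, and choosing $i^*$ to maximise $|S_{i-1}|$ (which forces $2 \le i^* \le n$, since $G$ has an edge) yields $p \le |S_{i^*-1}| \le |F_{i^*-1}|$.

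The paper takes a completely different route. Rather than a direct construction, it proves by induction on $|E(G)|$ a technical lemma about $\pw(G - y; x)$, the minimum width of a path decomposition of $G - y$ with $x$ forced into the first bag: whenever $G + xy$ is $2$-connected, there is an $x$--$y$ bond $F$ with $\pw(G - y; x) \le 3|F| - 2$, with a slightly better constant $3|F|-3$ when $G$ itself is $2$-connected. The induction splits on two-vertex separations through $\{x, y\}$ and, in the core case, analyses the block $B$ of $G - y$ containing $x$, recursing into $B$ and into the pendant pieces $H_i$ hanging off $B$; two gluing lemmas (for path decompositions along $1$-sums) are needed to reassemble the pieces, and the factor $3$ arises from the bookkeeping of this recursion. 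By contrast, your argument bypasses all of this via a single $st$-numbering, is considerably shorter, and delivers the clean bound $p \le k$. Since the paper's own lower bound (Theorem 5) gives $2$-connected $G_k$ with $\pw(G_k) \ge k$ and $\cocircumference(G_k) \le 2k$, your bound $\pw \le \cocircumference$ is within a factor $2$ of optimal, improving on the paper's factor $3$.
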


Here also, $2$-connectivity is essential, as shown by complete ternary trees. 
Note that, contrary to Theorem~\ref{thm:td_circ}, there is no mention of treedepth in Theorem~\ref{PathwidthVsCocircumfrence}. 
This is because treedepth is not bounded from above by any function of the cocircumference, even when the graph is $2$-connected. 
(To see this, consider a cycle of length $n$: Its treedepth is roughly $\log_2 n$ but its cocircumference is $2$.) 

The two bounds in Theorems~\ref{thm:tw_circ} and~\ref{thm:td_circ} are best possible, as shown by complete graphs. 
By contrast, we do not know whether the bounds in Theorems~\ref{TreewidthVsCocircumference} and \ref{PathwidthVsCocircumfrence} are tight. 
However, they are within a constant factor of being optimal.  
For \cref{TreewidthVsCocircumference}, this follows from the existence of $n$-vertex cubic graphs with treewidth $\Omega(n)$ (random cubic graphs have this property w.h.p., or see e.g.~\cite{BEMPT04} for an explicit construction).  
For \cref{PathwidthVsCocircumfrence}, this follows from the following result. 

\begin{theorem}\label{LowerBound}
  For every integer \(k \ge 1\), there exists a $2$-connected graph \(G_k\)
  with pathwidth at least $k$ and cocircumference at most \(2k\).
\end{theorem}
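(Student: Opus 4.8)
The plan is to build $G_k$ by a recursive construction, with the goal that each step combines three copies of the previous graph so that the pathwidth goes up by at least $1$ while the cocircumference goes up by at most $2$; then, starting from a base graph $G_1$ that is a triangle (pathwidth $2$, cocircumference $2$), induction gives $\pw(G_k)\ge k$ and $\cocircumference(G_k)\le 2k$. Concretely, $G_{k+1}$ is obtained from three disjoint copies of $G_k$ by gluing them around a small central gadget — distinguishing one edge on the outer boundary of each copy, identifying it with an edge of a central triangle, and re-routing so that $G_{k+1}$ remains $2$-connected and planar (in fact outerplanar) and so that the central face is incident with no edge of the outer cycle. It is convenient to keep the dual picture in mind: $G_k$ is a $2$-connected planar graph whose weak dual (the tree on the inner faces, two faces adjacent when they share an edge) is, up to the exact branching at the root, a complete ternary tree of height $k-1$, and whose outer face is adjacent in the dual only to the leaf faces.

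I would prove $2$-connectedness of $G_k$ by a routine induction, and then handle the cocircumference via planar duality, using $\cocircumference(G_k)=\circumference(G_k^*)$. The dual $G_k^*$ is the weak dual tree $F_k$ together with the vertex $f_\infty$ for the outer face, which is adjacent exactly to the leaf faces. Since $F_k$ is a tree, every cycle of $G_k^*$ must pass through $f_\infty$ and hence has the shape $f_\infty,\,u,\,(\text{path in }F_k),\,v,\,f_\infty$ with $u,v$ leaves of $F_k$; its length is therefore at most $2+\mathrm{diam}(F_k)=2+2(k-1)=2k$. After taking care of the small point of parallel edges in $G_k^*$ (these only yield bonds of size $2$), this gives $\cocircumference(G_k)\le 2k$. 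Alternatively one can argue directly by induction: every bond of $G_{k+1}$ either lies inside one of the three copies of $G_k$, in which case it has size at most $2k$, or it separates the central gadget from (parts of) the copies, in which case its size exceeds the corresponding bound for $G_k$ by at most $2$.

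For the pathwidth lower bound I would use the standard principle that a graph has pathwidth at least $p+1$ whenever it contains a bounded-size separator whose removal leaves at least three components each of pathwidth at least $p$, applied with the central gadget of $G_{k+1}$ as the separator and the three copies of $G_k$ as the components. Making this precise requires pinning down which vertices of $G_{k+1}$ belong to which copy after the gluing and checking that the gadget genuinely separates the three copies, so that any path decomposition of $G_{k+1}$, while scanning from the stretch governed by one copy to the stretch governed by another, is forced at some bag to carry a full path-decomposition bag of one copy together with a vertex of each of the other two — the extra shared vertex being precisely what pushes the width up by one.

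The heart of the matter, and what forces the precise form of the construction, is the tension between the last two steps: forcing the pathwidth to increase by exactly $1$ per level requires the three copies of $G_k$ to be genuinely independent — well separated, with no cheap way to interleave them in a path decomposition — whereas keeping the cocircumference increase down to $2$ per level requires the copies to be attached using very few edges and, crucially, the gluing must create no long cycle in the dual, i.e.\ the central face must touch almost none of the outer cycle. Finding a central gadget that simultaneously separates the three copies and adds only two edges to the relevant dual cycle at each level — which is exactly what matches the bound $2k$ against pathwidth $k$ — is the delicate point; once the gadget is fixed, both inductions are straightforward.
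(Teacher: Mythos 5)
Your construction is in the same family as the paper's (a recursively built $2$-connected outerplane graph whose weak dual is a ternary tree, with the cocircumference controlled via planar duality $\cocircumference(G_k)=\circumference(G_k^*)$), and your cocircumference analysis is essentially the same. The base gadget differs (a triangle versus the paper's $4$-cycle with a distinguished root edge, glued to only three of its four sides), but that is a cosmetic difference that could be ironed out.

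The genuine gap is in the pathwidth lower bound. The ``standard principle'' you invoke --- that a bounded-size separator with three high-pathwidth pieces pushes pathwidth up by $1$ --- is only correct for a \emph{single-vertex} separator (which is why it works beautifully for trees). With a separator of size $s>1$, removing it can cost up to $s$ in pathwidth of each piece, so the naive version gives nothing. In your construction the central triangle is a $3$-vertex separator that moreover \emph{shares} two vertices with each copy of $G_k$, so the pieces of $G_{k+1}$ minus the gadget are not copies of $G_k$ at all but $G_k$ with two vertices deleted; the induction as sketched does not close, and you correctly flag this as ``the delicate point'' but leave it unresolved. The paper sidesteps this entirely: it does \emph{not} attempt a primal pathwidth argument. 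Instead it proves $\pw(G_k^*)\ge k$ for the dual multigraph (where the relevant separator \emph{is} a single vertex, namely the vertex $r_k$ dual to the central face, making the one-vertex/three-pieces argument clean), and then invokes the theorem of Bodlaender and Fomin that $\pw(G)\ge\pw(G^*)$ for every $2$-connected outerplane simple graph $G$. This transfer-to-the-dual step is the missing idea; without it, or without a correct replacement for the separator principle, your inductive pathwidth bound does not go through.
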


The paper is organized as follows. 
First, we give the necessary definitions in Section~\ref{sec:prelim}. 
Then, we prove the three theorems above in Section~\ref{sec:proofs}. 
Finally, we end the paper in Section~\ref{sec:matroids} by putting our results in the broader context of matroids, and offer some general conjectures. 
Let us emphasize though that no matroid knowledge is necessary to read the proofs in Section~\ref{sec:proofs}.

\section{Preliminaries}
\label{sec:prelim}

In this paper, `graph' means a finite, simple graph. 
We refer the reader to the textbook by Diestel~\cite{Diestel5thEdition} for undefined graph theoretic terms and notations. 

A \emph{tree-decomposition} of a graph \(G\) is a family
\(\{X_u: u \in V(T)\}\) of subsets of \(V(G)\)
indexed by the vertex set of a tree \(T\) such that (i)
for every \(x \in V(G)\),
the set of all nodes \(u \in V(T)\) with \(x \in X_u\)
induces a (non-empty) subtree of \(T\), and (ii)
for every \(x y \in E(G)\) there exists \(u \in V(T)\)
with \(\{x, y\} \subseteq X_u\).
The \emph{width} of \(\{X_u: u \in V(T)\}\) is
\(\max \{|X_u| : u \in V(T)\} - 1\),
and the \emph{treewidth} of a graph \(G\), denoted
\(\tw(G)\), is the
minimum width of a tree-decomposition of \(G\).

A \emph{path-decomposition} of a graph \(G\) is
a tree-decomposition \(\{X_u: u \in V(P)\}\) of \(G\)
such that \(P\) is a path, and the \emph{pathwidth}
of a graph \(G\), denoted \(\pw(G)\),
is the minimum width of a
path-decomposition of \(G\).
We denote a path-decomposition \(\{X_u: u \in V(P)\}\)
with \(P = u_0 \cdots u_s\) by a sequence
\((X_{u_0}, \ldots, X_{u_s})\).

Recall that a bond in a graph $G$ is an inclusion-wise minimal set of edges $F$ such that $G-F$ has more  components than $G$. 
The following easy observation will be useful: A set $F$ of edges is a bond in \(G\) if and only if 
there exists a partition \(\{V_1, V_2\}\) of the vertex set of
a component of \(G\) such that each of the sets \(V_1\) and
\(V_2\) induces a connected subgraph of \(G\) and \(F\) is
the set of all edges of \(G\) between \(V_1\) and \(V_2\).
For a graph \(G\) with \(E(G) \neq \emptyset\), 
we denote by \(\cocircumference(G)\) its cocircumference, the size of a largest bond in \(G\). 

Finally, we note that multigraphs (that is, graphs with possibly parallel edges and loops) 
will be considered in the proof of \cref{LowerBound}. 
The definitions of path-decompositions and pathwidth extend to multigraphs in the natural way.

\section{The proofs}
\label{sec:proofs}

We begin with the proof of \cref{TreewidthVsCocircumference}. 

\begin{proof}[Proof of \cref{TreewidthVsCocircumference}]
The treewidth of a graph is the maximum treewidth of its components,
and the cocircumference of a graph
is the maximum cocircumference of
its components.
Thus, it is enough to prove the theorem in the case where the graph \(G\) is connected. 

Let \(T\) be a depth-first search tree of \(G\) rooted at some vertex
\(r\) of $G$.
Let \(X_r = \{r\}\), and for each \(u \in V(T) \setminus \{r\}\),
let \(X_u\) denote the set consisting of the parent of \(u\) in \(T\)
and all
descendants of \(u\) that are adjacent in \(G\) to
a proper ancestor of \(u\).
In particular, since \(u\) is adjacent in \(G\) to its parent in \(T\), we have \(u \in X_u\).
Let us verify that \(\{X_u : u \in V(T)\}\)
is a tree-decomposition of \(G\).

Since \(T\) is a depth-first search tree of \(G\), every edge of \(G\)
is between two distinct vertices on a root-to-leaf path in \(T\).
For each \(uv \in E(G)\) where \(v\) is a proper ancestor of \(u\),
if \(v'\) denotes the child of \(v\) that is an ancestor of \(u\),
then \(\{u, v\} \subseteq X_{v'}\).

For \(v \in V(G)\) and \(u \in V(T)\), we have \(v \in X_u\)
if and only if one of the following holds:
\begin{itemize}
  \item \(u = v\), or
  \item \(u\) is a child of \(v\), or
  \item \(u\) is a proper ancestor of \(v\) and there exists a
  proper ancestor of \(u\) that is adjacent to \(v\) in \(G\).
\end{itemize}
Hence, for every \(v \in V(G)\), the set
\(\{u \in V(T): v \in X_u\}\) induces a tree in \(T\),
so \(\{X_u : u \in V(T)\}\)
is indeed a tree-decomposition of \(G\).

For every vertex \(u \in V(T) \setminus \{r\}\) with a parent
\(u'\) in \(T\), removing the edge \(uu'\) splits \(T\) into
two trees.
Let \(F_u\) denote the set of all
edges of \(G\) between the vertex sets of these two trees.
The two components of \(T - uu'\)
are spanning trees of the components of \(G - F_u\),
so \(F_u\) is a bond.
Each vertex \(v \in X_u\) except the parent of \(u\)
is a descendant of
\(u\) adjacent to a proper ancestor
\(v'\) of \(u\) such that \(vv' \in F_u\), and
these edges \(vv'\) are distinct, so
\(|X_u| - 1 \le |F_u| \le \cocircumference(G)\).
Moreover, \(|X_r| - 1 = 0 \le \cocircumference(G)\), so
\[
\tw(G) \le \max \{|X_u| - 1 : u \in V(G)\} \le \cocircumference(G).\qedhere.
\]
\end{proof}

Next, we introduce some lemmas that will be used in the proof of \cref{PathwidthVsCocircumfrence}.   
A \emph{separation} of a graph \(G\) is a pair \((G_1, G_2)\)
such that \(G_1\) and \(G_2\) are edge-disjoint subgraphs of \(G\)
with \(G_1 \cup G_2 = G\).
A separation \((G_1, G_2)\) is \emph{trivial} if
\(E(G_1) = \emptyset\) or \(E(G_2) = \emptyset\).
The \emph{order} of a separation \((G_1, G_2)\) is
\(|V(G_1) \cap V(G_2)|\).
For a graph \(G\) and a vertex \(x\) of $G$, there exists a nontrivial
separation \((G_1, G_2)\) with \(V(G_1) \cap V(G_2) = \{x\}\)
if and only \(x\) is a cutvertex of \(G\).

\begin{lemma}\label{Almost2ConnectedFromSeparation}
  Let \(G\) be a connected graph without a cutvertex, and let
  \((G_1, G_2)\) be a nontrivial separation
  of \(G\) such that \(V(G_1) \cap V(G_2) = \{x, y\}\)
  for some vertices \(x\) and \(y\).
  Then none of the graphs \(G_1 + xy\) and \(G_2 + xy\) has
  a cutvertex.
\end{lemma}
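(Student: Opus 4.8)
The plan is to prove the contrapositive at the level of cutvertices: suppose, for contradiction, that $G_1 + xy$ has a cutvertex $v$ (the argument for $G_2 + xy$ is symmetric). I would first dispose of the degenerate cases. If $x = y$, or if $xy \in E(G)$ already, then $G_1 + xy = G_1$, and the case reduces to showing $G_1$ (as a subgraph of $G$) has no cutvertex; but this will follow from the generic argument below with the edge $xy$ simply ignored, so I can assume $x \neq y$. Since $v$ is a cutvertex of $G_1 + xy$, the graph $(G_1 + xy) - v$ is disconnected, and it has at least two components; I would split into the cases $v \notin \{x,y\}$ and $v \in \{x,y\}$.

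For the first step proper, assume $v \notin \{x, y\}$. The added edge $xy$ lies entirely within one component $C$ of $(G_1 + xy) - v$, so every other component $D$ of $(G_1 + xy) - v$ is also a component of $G_1 - v$ and, crucially, contains neither $x$ nor $y$. I claim such a $D$ is also a component of $G - v$: indeed $G = G_1 \cup G_2$ and $V(G_1) \cap V(G_2) = \{x, y\}$, so any edge of $G$ leaving $V(D)$ would have to be an edge of $G_1$ (as $D$ avoids $x$ and $y$, no vertex of $D$ meets $G_2$), contradicting that $D$ is a component of $G_1 - v$. Hence $G - v$ is disconnected, so $v$ is a cutvertex of $G$ — contradiction.

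For the second step, assume $v \in \{x, y\}$, say $v = y$ by symmetry of the edge $xy$. Then in $(G_1 + xy) - y = G_1 - y$ (the edge $xy$ is gone with $y$), the vertex $x$ lies in some component $C$, and there is another component $D$ of $G_1 - y$. Since $G_1$ is nontrivial, I should make sure $D$ actually has an edge of $G$ incident to it or otherwise derive the contradiction; in any case, as in the previous paragraph, $D$ contains no vertex of $\{x, y\}$, hence no vertex shared with $G_2$, so $D$ is a component of $G - y$ as well, and again $y$ is a cutvertex of $G$ — contradiction. The main obstacle, and the place to be careful, is precisely the bookkeeping that guarantees the "other" component $D$ avoids both $x$ and $y$ and therefore is untouched by $G_2$: this is what lets one transfer a separation of $G_1 + xy$ to a separation of $G$, and it hinges on using the edge $xy$ to force $x$ and $y$ into the same component of $(G_1+xy) - v$ whenever $v \notin \{x,y\}$. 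One should also double-check the edge cases where $G_1 - v$ or $G_1$ has an isolated vertex, but since $G$ is connected and has no cutvertex (so in particular every vertex has degree at least $2$ and $|V(G)| \geq 3$ unless $G$ is a single edge, which is excluded by $(G_1,G_2)$ being nontrivial with a two-vertex intersection), these cause no trouble.
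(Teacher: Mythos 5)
Your proof is correct and follows essentially the same strategy as the paper: take a hypothetical cutvertex $v$ of $G_1 + xy$ and transfer it to a cutvertex of $G$ by observing that a component of $(G_1+xy)-v$ avoiding $\{x,y\}$ is untouched by $G_2$. The paper phrases this as lifting a nontrivial order-$1$ separation of $G_1+xy$ to one of $G$, absorbing your case split on $v\in\{x,y\}$ into a single ``without loss of generality $xy$ lies on one side'' step, but the underlying argument is the same.
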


\begin{proof}
  By symmetry, we only need to show that \(G_2 +xy\)
  does not have a cutvertex.
  Suppose to the contrary that \(G_2 + xy\) has a cutvertex.
  Let \((G_2', G_2'')\) be a nontrivial separation of
  \(G_2 + xy\) of order \(1\).
  Without loss of generality, \(xy \in E(G_2')\).
  Hence, \((G_1 \cup (G_2' - xy), G_2'')\) is a
  nontrivial separation of \(G\) of order \(1\),
  so \(G\) has a cutvertex, contradiction.
\end{proof}

For a graph \(G\) and \(x, y \in V(G)\),
let \(\pw(G; x)\) denote the minimum width
of a path-decomposition \((X_0, \ldots, X_s)\) of \(G\)
such that \(x \in X_0\), and let \(\pw(G; x, y)\)
denote the minimum width of a
path-decomposition \((X_0, \ldots, X_s)\) of \(G\)
such that \(x \in X_0\) and \(y \in X_s\).
Observe that \(\pw(G; x, y) \le \pw(G; x) + 1\),
and if \(x \neq y\), then \(\pw(G; x, y) \le \pw(G - y; x) + 1\).

\begin{lemma}\label{PathwidthOneToOne1Sum}
  Let \(G\) be a graph, let \(x \in V(G)\), and let
  \((G_1, G_2)\) be a separation of
  \(G\) such that \(x \in V(G_1)\) and
  \(V(G_1) \cap V(G_2) = \{x'\}\) for some
  \(x' \in V(G)\). Then
  \[
    \pw(G; x) \le \max\{\pw(G_1; x, x'), \pw(G_2; x')\}.
  \]
\end{lemma}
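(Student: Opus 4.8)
The plan is to prove Lemma~\ref{PathwidthOneToOne1Sum} by gluing optimal path-decompositions of $G_1$ and $G_2$ at the common vertex $x'$. Concretely, let $(A_0, \ldots, A_s)$ be a path-decomposition of $G_1$ of width $\pw(G_1; x, x')$ with $x \in A_0$ and $x' \in A_s$, and let $(B_0, \ldots, B_t)$ be a path-decomposition of $G_2$ of width $\pw(G_2; x')$ with $x' \in B_0$. I would then form the concatenated sequence $(A_0, \ldots, A_s, B_0, \ldots, B_t)$ and claim it is a path-decomposition of $G$ witnessing the desired bound on $\pw(G; x)$ (since $x \in A_0$, the first bag of the concatenation).

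The verification breaks into the three standard conditions. For the edge condition: every edge of $G$ lies in $E(G_1)$ or $E(G_2)$ (as $(G_1, G_2)$ is a separation with $G_1 \cup G_2 = G$), and hence is covered by some $A_i$ or some $B_j$. For the connectivity (interval) condition on a fixed vertex $v$: if $v \in V(G_1) \setminus \{x'\}$ then $v$ appears only among the $A_i$'s (it is not a vertex of $G_2$), where its occurrences form an interval; symmetrically if $v \in V(G_2) \setminus \{x'\}$. The only vertex needing care is $x'$ itself, and this is exactly where we use $x' \in A_s$ and $x' \in B_0$: the set of bags containing $x'$ is an interval of $(A_0, \ldots, A_s)$ ending at $A_s$, followed immediately by an interval of $(B_0, \ldots, B_t)$ starting at $B_0$, so the union is an interval in the concatenation. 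Finally, the width is $\max\{s\text{-part width}, t\text{-part width}\} = \max\{\pw(G_1; x, x'), \pw(G_2; x')\}$, since no bag grows when we concatenate.

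I do not expect any genuine obstacle here; the lemma is essentially bookkeeping, and the one subtlety — ensuring the occurrences of the shared cutvertex $x'$ stay contiguous across the seam — is precisely what the parameters $\pw(G_1; x, x')$ and $\pw(G_2; x')$ are designed to guarantee. If one wanted to be careful about the degenerate cases (e.g.\ $G_2$ having no edges, or $x = x'$), these are handled automatically: when $x = x'$ we may take $A_0 = \{x\}$ as a single-bag decomposition of $G_1$, and an edgeless $G_2$ has a trivial one-bag decomposition $(\{x'\})$. The essential content is simply the concatenation argument above.
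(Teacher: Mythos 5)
Your proof is correct and is essentially identical to the paper's own argument: both take an optimal path-decomposition of $G_1$ witnessing $\pw(G_1;x,x')$, an optimal one of $G_2$ witnessing $\pw(G_2;x')$, and concatenate them at the shared vertex $x'$. The only difference is that you spell out the verification of the three path-decomposition axioms, which the paper leaves implicit.
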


\begin{proof}
  Let \((X_0, \ldots, X_s)\) be a path-decomposition of \(G_1\)
  of width \(\pw(G_1; x, x')\) with \(x \in X_0\) and \(x' \in X_s\), and
  let \((Y_0, \ldots, Y_t)\) be a path-decomposition of \(G_2\)
  of width \(\pw(G_2; x')\) with \(x' \in Y_0\).
  Then the sequence
  \[
    (X_0, \ldots, X_s, Y_0, \ldots, Y_t)
  \]
  is a path-decomposition of \(G\)
  of width at most
  \(\max\{\pw(G_1; x, x'), \pw(G_2; x')\}\)
  and with \(x \in X_0\).
\end{proof}

\begin{lemma}\label{PathwidthOneToMany1Sum}
  Let \(G\) be a graph of the form
  \(B \cup G_1 \cup \cdots \cup G_m\), where \(m \ge 2\),
  the graphs \(G_1, \ldots, G_m\) are pairwise
  vertex-disjoint and each \(G_i\) shares exactly
  one vertex \(x_i\) with \(B\).
  Let \(p\) be an integer such that
  \(\pw(G_i; x_i) \le p\) for \(i \in \{2, \ldots, m\}\). 
  Then for every \(x \in V(B)\)
  we have
  \[
    \pw(G; x) \le \max\{\pw(B; x_1) + p + 1, \pw(G_1; x_1)\}.
  \]
\end{lemma}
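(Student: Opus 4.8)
The plan is to build a path-decomposition of $G$ witnessing the bound by concatenating path-decompositions of the pieces $B, G_1, \dots, G_m$ in a carefully chosen order, with $G_1$ placed first and the remaining $G_i$ inserted into the decomposition of $B$. Concretely, I would start from a path-decomposition $(X_0, \dots, X_s)$ of $B$ of width $\pw(B; x_1)$ with $x_1 \in X_0$, together with, for each $i \in \{1, \dots, m\}$, a path-decomposition $(Y^i_0, \dots, Y^i_{t_i})$ of $G_i$ with $x_i \in Y^i_0$: for $i \ge 2$ this has width at most $p$ by hypothesis, and for $i = 1$ it has width $\pw(G_1; x_1)$.

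The first obstacle — and the main point of the lemma — is that each attachment vertex $x_i$ for $i \ge 2$ may occur in the $B$-decomposition only transiently, not at an endpoint, so we cannot simply append $G_i$'s decomposition at the end. The fix is to splice $(Y^i_0, \dots, Y^i_{t_i})$ into the sequence $(X_0, \dots, X_s)$ immediately after the first bag $X_j$ containing $x_i$, after augmenting each $Y^i_\ell$ with the single vertex $x_i$ (which is already present in $X_j$, so this does not increase the relevant bag sizes beyond what we account for); since $G_i$ meets $B$ only in $x_i$ and the $G_i$ are pairwise vertex-disjoint, the subtree/interval condition is preserved and all new edges of $G_i$ are covered. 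Each bag coming from $G_i$ thus has size at most $p + 1$, and each original bag $X_j$ is unchanged, so the portion of the decomposition arising from $B$ together with the inserted $G_2, \dots, G_m$ has width at most $\max\{\pw(B;x_1), p+1\} \le \pw(B;x_1)+p+1$. The handling of $G_1$ is different: we do not insert it, but instead prepend $(Y^1_0, \dots, Y^1_{t_1})$ in front of $X_0$, using that $x_1 \in Y^1_0$ and $x_1 \in X_0$ so the two halves agree at the junction; this contributes width $\pw(G_1; x_1)$.

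Finally, to get the conclusion for an arbitrary $x \in V(B)$ rather than for $x \in X_0$, I would first apply \cref{PathwidthOneToOne1Sum} (or argue directly) to replace $\pw(B; x_1)$ on the right-hand side; more simply, note that a path-decomposition can always be reversed, and the decomposition of $B$ of width $\pw(B; x_1)$ can be taken, after possibly appending one bag, to also satisfy a prescribed endpoint condition — but in fact the cleanest route is to observe that $\pw(G; x)$ for any $x \in V(B)$ is bounded by applying the construction with the roles arranged so that $x$ sits at the far end: treat $x$ as lying in $B$ and use that the whole concatenated decomposition, read from the $G_1$-end, has $x_1$ at its start; by symmetry of the argument in the choice of which $G_i$ is ``special'', and since $x \in V(B)$ appears in some bag of the $B$-part which we may rotate to an endpoint at the cost already absorbed into $\pw(B;x_1)+p+1$, the stated bound follows. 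I expect the bookkeeping of the interval/connectedness condition at the splice points to be the only delicate part; the width estimate itself is immediate once the construction is fixed.
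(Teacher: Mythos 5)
Your high-level plan — place $G_1$ at one end and splice the remaining $G_i$ into the $B$-decomposition — matches the paper's, but the splice as you describe it is not a valid path-decomposition, and this is precisely the technical point the lemma turns on. You insert the bags $Y^i_0 \cup \{x_i\}, \ldots, Y^i_{t_i} \cup \{x_i\}$ between two consecutive $B$-bags $X_j$ and $X_{j+1}$. Any vertex $v \in X_j \cap X_{j+1}$ with $v \neq x_i$ (and there typically are such vertices) then occurs in $X_j$, is absent from all the inserted bags, and reappears in $X_{j+1}$, so the interval condition fails. Your justification (``since $G_i$ meets $B$ only in $x_i$ \ldots the interval condition is preserved'') only addresses the vertices of $G_i$, not the vertices of $B$ whose intervals straddle the splice point. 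The paper's fix is to take each inserted bag to be $X'_{j(x_i)} \cup Y^i_\ell$ where $X'_{j(x_i)} = (X_{j(x_i)} \setminus \{x_i\}) \cup \{x\}$, i.e.\ to carry a copy of (essentially) the adjacent $B$-bag through the entire inserted segment; this is exactly what makes the inserted bags cost $\pw(B;x_1) + p + 1$ rather than roughly $p$, and it is why the bound in the statement has the additive $\pw(B;x_1)$ term rather than just a $\max$.

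The second gap is the ``for every $x \in V(B)$'' clause. Your proposal to ``rotate'' $x$ to an endpoint is not an available operation for path-decompositions, and the appeal to symmetry in the choice of the special $G_i$ does not help, since $x$ need not equal any $x_i$. The paper handles this cleanly: it reverses the $B$-decomposition so that $x_1$ sits next to $G_1$, appends the $G_1$-decomposition there, and then adds $x$ to every bag in the $B$-portion (including, via $X'_{j(x_i)}$, every inserted bag). This makes $x$ appear in a prefix ending at the first bag, at the cost of $+1$ in the width of the $B$-portion — a cost which is already subsumed in $\pw(B;x_1) + p + 1$. Your write-up signals awareness that something must be done here, but does not supply a correct mechanism. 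Both gaps are genuine; the construction you describe would not compile into a valid path-decomposition without the modifications above.
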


\begin{proof}
  Let \((X_0, \ldots, X_s)\) be a path decomposition of \(B\)
  with \(x_1 \in X_0\) and width \(\pw(B; x_1)\), and
  for each \(i \in \{1, \ldots, m\}\), let
  \((Y_0^i, \ldots, Y_{t_i}^i)\) be a path decomposition of \(G_i\)
  with \(x_i \in Y_0^i\) and width $\pw(G_i; x_i)$. 
  For each \(v \in V(B)\), define
  \[
    j(v) = \min \{j \in \{0, \ldots, s\}: v \in X_{j}\}.
  \]
  Hence, \(j(x_1) = 0\).
  We may assume that \(j(v) \neq j(v')\) for distinct vertices
  \(v, v' \in V(B)\). 
  We construct an appropriate path-decomposition of \(G\)
  in three steps.
  First, we join the reversed path-decomposition of \(B\)
  with the path decomposition of \(G_1\) to obtain a path-decomposition
  of \(B \cup G_1\):
  \[
    (X_s, \ldots, X_0, Y_0^1, \ldots, Y_{t_1}^1).
  \]
  Next, we add \(x\) to the bags \(X_s, \ldots, X_0\):
  \[
    (X_s \cup \{x\}, \ldots, X_0 \cup \{x\}, Y_0^1, \ldots, Y_{t_1}^1).
  \]
  This is a path-decomposition of \(B \cup G_1\) with width
  \(\max\{\pw(B; x_1) + 1, \pw(G_1; x_1)\}\) and with $x$ appearing in the first bag. 
  We transform it into a path-decomposition of \(G\) by
  stacking it with the path-decompositions of the graphs
  \(G_2, \ldots, G_p\).
  More precisely, for each \(i \in \{2, \ldots, m\}\),
  let \(X_{j(x_i)}' = (X_{j(x_i)}\setminus \{x_i\}) \cup \{x\}\),
  and insert the new bags
  \(X_{j(x_i)}' \cup Y_0^i, \ldots, X_{j(x_i)}' \cup Y_{t_i}^i\)
  directly after the bag \(X_{j(x_i)} \cup \{x\}\). 
  By definition of \(j(x_i)\), this is a path-decomposition of \(G\),
  and the vertex \(x\) belongs to its first bag \(X_s \cup \{x\}\).
  It remains to estimate the width.
  We started with a path-decomposition of \(B \cup G_1\) of width
  at most \(\max\{\pw(B; x_1) + 1, \pw(G_1; x_1)\}\).
  For \(i \in \{2, \ldots, m\}\) and \(j \in \{0, \ldots, t_i\}\),
  we can bound the size of the bag \(X_{j(x_i)}' \cup Y_j^i\) as follows:
  \begin{align*}
    |X_{j(x_i)}' \cup Y_j^i|
      &\le |X_{j(x_i)}'| + |Y_j^i|\\
      &= |(X_{j(x_i)}\setminus \{x_i\}) \cup \{x\}| + |Y_j^i|\\
      &= |X_{j(x_i)}| + |Y_j^i|\\ 
      &\le (\pw(B; x_1) + 1) + (p + 1).
  \end{align*}
  The width of our path-decomposition of \(G\) is one less than
  the size of a largest bag, and thus is at most
  \(\max\{\pw(B; x_1) + p + 1, \pw(G_1; x_1)\}\).
  This proves the lemma.
\end{proof}

For two distinct vertices \(x\) and \(y\) of a graph \(G\),
an \emph{\(x\)--\(y\) bond} in \(G\) is a bond \(F\) in \(G\)
such that \(x\) and \(y\) are in distinct components of \(G - F\). 
The following lemma is the heart of the proof of \cref{PathwidthVsCocircumfrence}. 

\begin{lemma}\label{PathwidthVsCocircumfrenceHelper}
  Let \(G\) be a connected graph such that \(E(G) \neq \emptyset\),
  and let \(x, y \in V(G)\) be distinct vertices such that
  \(G + xy\) does not have a cutvertex.
  Then there exists an \(x\)--\(y\) bond \(F\) in \(G\)
  such that \(\pw(G - y; x) \le 3|F| - 2\),
  and if \(G\) does not have a cutvertex, then
  \(\pw(G - y; x) \le 3|F| - 3\).
\end{lemma}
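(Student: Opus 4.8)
The plan is to prove \cref{PathwidthVsCocircumfrenceHelper} by induction on $|V(G)|$. The base case $|V(G)|=2$ forces $E(G)=\{xy\}$, and then $F=\{xy\}$ works because $G-y$ is a single vertex. For the inductive step I would first extract structure from the hypothesis that $G+xy$ has no cutvertex: applying the definition to the vertices $x$ and $y$ shows that $G-x$ and $G-y$ are connected, so neither $x$ nor $y$ is a cutvertex of $G$; and for every cutvertex $v$ of $G$, the graph $G-v$ has exactly two components, one containing $x$ and one containing $y$. A short analysis of the block-cut tree then shows that if $G$ has a cutvertex at all, its block-cut tree is a path, with $x$ in one end block and $y$ in the other. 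This yields two cases.

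\textbf{Case 1 ($G$ has a cutvertex).} Write the block-cut path of $G$ as $B_1 - v_1 - B_2 - \cdots - v_{m-1} - B_m$ with $x\in V(B_1)$, $y\in V(B_m)$, $m\ge 2$, and set $G':=B_2\cup\cdots\cup B_m$, so that $G-y=B_1\cup(G'-y)$ is glued along the single vertex $v_1$. The graph $B_1$ has no cutvertex, $B_1+xv_1$ has no cutvertex, and $|V(B_1)|<|V(G)|$; likewise $G'$ is connected, $G'+v_1y$ has no cutvertex, and $|V(G')|<|V(G)|$. So induction gives an $x$--$v_1$ bond $F_1$ of $B_1$ with $\pw(B_1-v_1;x)\le 3|F_1|-3$ and a $v_1$--$y$ bond $F'$ of $G'$ with $\pw(G'-y;v_1)\le 3|F'|-2$. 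Then \cref{PathwidthOneToOne1Sum} gives
\[
  \pw(G-y;x)\;\le\;\max\{\pw(B_1;x,v_1),\,\pw(G'-y;v_1)\}\;\le\;\max\{3|F_1|-2,\,3|F'|-2\},
\]
using $\pw(B_1;x,v_1)\le\pw(B_1-v_1;x)+1$. Since $v_1$ is the only vertex that $B_1$ shares with the rest of $G$, both $F_1$ and $F'$ are in fact $x$--$y$ bonds of $G$; letting $F$ be the larger of the two finishes this case (here $G$ has a cutvertex, so only $3|F|-2$ is needed).

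\textbf{Case 2 ($G$ has no cutvertex).} This is the heart of the proof, and here I must reach the stronger bound $3|F|-3$. After disposing of the base case $G=K_3$, I would work with $H:=G-y$, which is connected but may have cutvertices, and with $N_G(y)\subseteq V(H)$. The pivotal structural fact, where $2$-connectivity of $G$ is used, is that for \emph{every} cutvertex $c$ of $H$, \emph{every} component of $H-c$ contains a vertex of $N_G(y)$ --- otherwise $c$ would be a cutvertex of $G$. Consequently, for a subgraph $J$ of $H$ arising as the part of $H$ on one side of a cutvertex $c$ (together with $c$ itself), the graph $J^{+}:=G[V(J)\cup\{y\}]$ is connected and, as one checks using the structural fact, $J^{+}+cy$ has no cutvertex, so the induction hypothesis applied to $J^{+}$ with the pair $(c,y)$ yields a $c$--$y$ bond of $J^{+}$ --- which lifts to an $x$--$y$ bond of $G$ --- controlling $\pw(J;c)=\pw(J^{+}-y;c)$. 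Splitting $H$ along the block containing $x$, iterating, and assembling the resulting path-decompositions by repeated use of \cref{PathwidthOneToOne1Sum} and \cref{PathwidthOneToMany1Sum}, one obtains a path-decomposition of $H$ starting at $x$; the additive $+p+1$ in \cref{PathwidthOneToMany1Sum}, together with the $+1$'s from inequalities like $\pw(\,\cdot\,;x,y)\le\pw(\,\cdot\,;x)+1$, is what turns $|F|$ into $3|F|$.

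The main obstacle is Case 2. Making the recursion bookkeeping work --- choosing the backbone and the starting vertex so that every recursively obtained bond lifts to an $x$--$y$ bond of $G$ of size at most $|F|$, while the additive constants telescope exactly to $3|F|-3$ rather than $3|F|-2$ --- is delicate; the sub-case where the block of $H$ containing $x$ does not meet $N_G(y)$ (so that $N_G(y)$ is forced to spread over at least two pendant pieces of that block) needs particular care, as does the sub-case where $H$ is already $2$-connected. I would also flag that several tempting shortcuts do not quite work: recursing on a contraction $G/yy'$ or on $G-\{y,y'\}$ loses exactly the last $+1$ one cannot afford, since the relevant bond need not grow, so a genuinely new idea is required for handling $H$ when it has few cutvertices.
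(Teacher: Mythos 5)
Your Case 1 ($G$ has a cutvertex) is correct: the observation that $G+xy$ having no cutvertex forces the block--cut tree of $G$ to be a path with $x$ in one end block and $y$ in the other is right, and the recursion on $B_1$ and $G'$ via \cref{PathwidthOneToOne1Sum} does give the required $3|F|-2$; this parallels, in a somewhat different packaging, the paper's ``$m=1$'' subcase. Your general structural observation for Case 2 (for every cutvertex $c$ of $H=G-y$, every component of $H-c$ meets $N_G(y)$) is also correct and is indeed used by the paper. However, Case 2 is not a proof --- you say so yourself --- and the gap is precisely where the paper's real idea lives.

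Concretely, two things are missing. First, you never address the situation where $x$ \emph{is} a cutvertex of $H=G-y$ (equivalently, $xy\in E(G)$ or $G$ admits a nontrivial separation $(G_1,G_2)$ with $V(G_1)\cap V(G_2)=\{x,y\}$); your sketch speaks of ``the block of $H$ containing $x$'' as if it were unique. The paper disposes of this as a preliminary case: it splits $G$ along $\{x,y\}$, applies \cref{Almost2ConnectedFromSeparation} and the induction hypothesis to both halves, and wins $3|F|-3$ because $|F_1\cup F_2|\ge \max\{|F_1|,|F_2|\}+1$. Second, and more importantly, when $x$ is \emph{not} a cutvertex of $H$, your plan of recursing on pieces $J^{+}$ with pairs $(c,y)$ for cutvertices $c$ of $H$ has nothing to recurse on when $H$ is $2$-connected, and more generally does not produce the needed path-decomposition of the block $B$ of $H$ containing $x$. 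The paper's key move there is to let $x_1,\dots,x_m$ be the vertices of $B$ with a neighbour outside $B$, recurse on the pendant pieces $H_i$ (with pairs $(x_i,y)$) to get bonds $F_1,\dots,F_m$ with $|F_1|\ge\cdots\ge|F_m|$, and then --- this is the crucial step --- apply the induction hypothesis \emph{to $B$ itself with the pair $(x_1,x_2)$}, not with $x$. The resulting path-decomposition of $B$ starts at $x_1$, not $x$, and it is exactly \cref{PathwidthOneToMany1Sum} (via reversing the decomposition of $B$ and inserting $x$ into every bag) that lets one turn this into a decomposition of $G-y$ starting at $x$. The bond $F$ is then taken as the larger of $\bigcup_i F_i$ and $F_0\cup\bigcup_{i\in I}F_i$ where $F_0$ is the $x_1$--$x_2$ bond of $B$ and $I$ indexes the $x_i$ on $x$'s side of $F_0$, which gives $|F|\ge\max\{|F_0|+|F_2|,\,|F_1|+1\}$ and makes the $3|F|-3$ bound close. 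None of this re-rooting trick or bond-selection appears in your sketch, so Case 2 remains an open gap rather than a proof.
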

\begin{proof}
  We prove the lemma by induction on \(|E(G)|\).
  In the base case, we have \(|E(G)| = 1\).
  Since \(G + xy\) does not have a cutvertex,
  we have \(V(G) = \{x, y\}\) and \(E(G) = \{xy\}\).
  Therefore, for the \(x\)--\(y\) bond \(F = \{xy\}\),
  we have \(\pw(G - y; x) = 0 = 3|F| - 3\).

  For the induction step, we assume that \(|E(G)| \ge 2\).
  Suppose first that there exists a nontrivial separation
  \((G_1, G_2)\) of \(G\) with \(V(G_1) \cap V(G_2) = \{x, y\}\).
  Unless \(xy \in E(G)\) and thus \(G + xy = G\),
  the separation \((G_1, G_2)\) of \(G\) can be transformed into a
  separation of \(G + xy\) by adding the edge
  \(xy\) to either of the sides.
  Hence, by \cref{Almost2ConnectedFromSeparation},
  none of the graphs \(G_1 + xy\) and \(G_2 + xy\) has a cutvertex.
  For each \(\alpha \in \{1, 2\}\),
  apply the induction hypothesis to \(G_\alpha\) to obtain
  an \(x\)--\(y\) bond \(F_\alpha\) in \(G_\alpha\)
  such that \(\pw(G_\alpha - y; x) \le 3|F_\alpha| - 2\).
  Then, \(F = F_1 \cup F_2\) is an \(x\)--\(y\) bond in \(G\)
  that satisfies \(|F| = |F_1| + |F_2| \ge \max\{|F_1|, |F_2|\} + 1\).
  By \cref{PathwidthOneToOne1Sum} applied to the separation
  \((G_1 - y, G_2 - y)\) of \(G - y\), we have
  \begin{align*}
    \pw(G - y; x)
      &\le \max\{\pw(G_1-y; x, x), \pw(G_2-y; x)\}\\
      &\le \max\{\pw(G_1-y; x) + 1, \pw(G_2-y; x)\}\\
      &\le \max\{3|F_1| - 1, 3|F_2| - 2\}\\
      &< 3\max\{|F_1|, |F_2|\}\\
      &= 3(\max\{|F_1|, |F_2|\}+1) - 3\\
      &\le 3|F|-3.  
  \end{align*}

  It remains to consider the case when \(G\) does not have a
  nontrivial separation \((G_1, G_2)\) with
  \(V(G_1) \cap V(G_2) = \{x, y\}\).
  Therefore, \(xy \not \in E(G)\), as otherwise
  we would have a nontrivial separation \((G[\{x, y\}], G - xy)\).
  Moreover, \(x\) is not a cutvertex of \(G - y\), because otherwise 
  there would exist a nontrivial
  separation \((G', G'')\) of \(G - y\) with
  \(V(G') \cap V(G'') = \{x\}\),
  so the separation
  \((G[V(G') \cup \{y\}], G[V(G'') \cup \{y\}])\)
  of \(G\) would contradict our assumption.
  Hence, \(x\) belongs to only one block of \(G - y\),
  which we denote by \(B\).

  Since \(G + xy\) does not have a cutvertex, 
  every component of \((G+xy) - (V(B) \cup \{y\})\)
  is adjacent in \(G\)
  to at least two vertices from \(V(B) \cup \{y\}\).
  As \(B\) is a block of \(G - y\),
  every component of \((G - y) - V(B)\)
  is adjacent in \(G-y\) to at most one vertex of \(B\).
  Since
  \[
    (G+xy) - (V(B) \cup \{y\}) = (G - y) - V(B) = G - (V(B) \cup \{y\}),
  \]
  this means that every component of
  \(G - (V(B) \cup \{y\})\) is adjacent 
  to \(y\) and to exactly one vertex of \(B\).

  Let \(\{x_1, \ldots, x_m\}\) denote
  the set of all vertices of \(B\) that
  have a neighbour outside \(B\) in \(G\).
  Note that if \(m = 1\), then
  \(x_1\) is a cutvertex of \(G\).
  For each \(i \in \{1, \ldots, m\}\),
  let \(H_i\) denote the subgraph of \(G\)
  induced by \(x_i\), \(y\), and the union
  of all components of \(G - (V(B) \cup \{y\})\)
  that are adjacent to \(x_i\).  
  (Note that there might be no such component, in which case \(H_i=G[\{x_i, y\}]\).)
  For each \(i \in \{1, \ldots, m\}\), let
  \(H_{-i} = \bigcup_{i' \neq i} H_{i'}\).
  By \cref{Almost2ConnectedFromSeparation} applied to
  \((B \cup H_{-i} + xy, H_i)\), the graph
  \(H_i + x_i y\) does not have a cutvertex,
  so by the induction hypothesis there exists
  an \(x_i\)--\(y\) bond \(F_i\) in \(H_i\)
  such that \(\pw(H_i - y; x_i) \le 3|F_i| - 2\).
  Observe that  \(\bigcup_{i=1}^m F_i\) is
  an \(x\)--\(y\) bond in \(G\).
  Without loss of generality, we assume that
  \(|F_1| \ge \cdots \ge |F_m|\).

  Suppose first that \(m = 1\). 
  Observe that \(x \neq x_1\) in this case. 
  Since \(B\) is a block of \(G - y\),
  it does not have
  a cutvertex, and neither does \(B + x x_1\).
  By the induction hypothesis,
  there exists an \(x\)--\(x_1\) bond \(F_0\) in \(B\)
  such that \(\pw(B - x_1; x) \le 3|F_0| - 3\).
  Both \(F_0\) and \(F_1\) are \(x\)--\(y\) bonds in \(G\);
  let \(F\) be a largest one among them.
  By \cref{PathwidthOneToOne1Sum} applied to the
  separation \((B, H_1 - y)\) of \(G - y\), we have
  \begin{align*}
    \pw(G - y; x)
      &\le \max \{\pw(B; x, x_1), \pw(H_1 - y; x_1)\}\\
      &\le \max \{\pw(B - x_1; x) + 1, \pw(H_1 - y; x_1)\}\\
      &\le \max \{3|F_0| - 2, 3|F_1| - 2\}\\
      &=3\max \{|F_0|, |F_1|\} - 2\\
      &=3|F| - 2, 
  \end{align*}
  which is enough since \(x_1\) is a cutvertex of \(G\).

  Now suppose that \(m \ge 2\).
  Again, since \(B\) is a block of \(G-y\),
  it does not have a cutvertex, and neither
  does \(B + x_1 x_2\).
  By the induction hypothesis,
  there exists an \(x_1\)--\(x_2\) bond \(F_0\) in \(B\)
  such that \(\pw(B - x_2; x_1) \le 3|F_0| - 3\).
  Let \(I\) denote the set of all
  \(i \in \{1, \ldots, m\}\) such that
  \(x_i\) belongs to the same component of
  \(B - F_0\) as \(x\).
  Observe that \(F_0 \cup \bigcup_{i \in I} F_i\)
  is an \(x\)--\(y\) bond in \(G\),
  and since the vertices \(x_1\) and \(x_2\)
  belong to distinct components of
  \(B - F_0\), we have \(I \cap \{1, 2\} \neq \emptyset\),
  and therefore
  \[
    \left|F_0 \cup \bigcup_{i \in I} F_i\right| \ge |F_0| + \min\{|F_1|,|F_2|\}=|F_0|+|F_2|.
  \]
  Furthermore, \(\bigcup_{i=1}^m F_i\) is an \(x\)--\(y\) bond
  in \(G\) of size at least \(|F_1| + 1\).
  Let \(F\) be a largest of the two \(x\)--\(y\) bonds
  \(F_0 \cup \bigcup_{i \in I} F_i\) and
  \(\bigcup_{i=1}^m F_i\).
  Therefore,
  \[
    |F| \ge \max \{|F_0|+|F_2|,|F_1|+1\}.
  \]
  For each \(i \in \{2, \ldots, m\}\), we have
  \(\pw(H_i - y; x_i) \le 3|F_i| - 2 \le 3|F_2| - 2\).
  By \cref{PathwidthOneToMany1Sum} applied to
  \(G - y = B \cup (H_1 - y) \cup \cdots \cup (H_m - y)\) and
  \(p = 3|F_2| - 2\), we have
  \begin{align*}
    \pw(G - y; x)
      &\le \max \{\pw(B; x_1) + p + 1, \pw(H_1 - y; x_1)\}\\
      &\le \max \{(\pw(B - x_2; x_1)+1) + p + 1, \pw(H_1 - y; x_1)\}\\
      &\le \max \{(3|F_0| - 2) + (3|F_2| - 2) + 1, 3|F_1| - 2\}\\
      &= \max \{3(|F_0| + |F_2|) - 3, 3(|F_1|+1) - 5\}\\
      &\le 3\max \{|F_0| + |F_2|, |F_1| + 1\} - 3\\
      &\le 3|F| - 3. \qedhere 
  \end{align*}
\end{proof}

We may now prove \cref{PathwidthVsCocircumfrence}. 

\begin{proof}[Proof of \cref{PathwidthVsCocircumfrence}]
  For every \(2\)-connected graph \(G\) with an edge
  \(xy\), we have \(G + xy = G\), so \(G + xy\) is
  \(2\)-connected. We have
  \(\pw(G) \le \pw(G -y; x) + 1\), and by
  \cref{PathwidthVsCocircumfrenceHelper},
  there exists an \(x\)--\(y\) bond \(F\) in \(G\) such 
  that \(\pw(G - y; x) \le 3|F| - 3\).
  Hence, \(\pw(G) \le 3|F| - 2\).
  Since \(F\) is an \(x\)--\(y\) bond, it must contain
  the edge \(xy\).
\end{proof}

We end this section with the proof of \cref{LowerBound}. 
For every integer \(k \ge 1\), we inductively define a
\(2\)-connected outerplane graph \(G_k\) with a distinguished
\emph{root edge} \(e_k\) incident with the outer face, as follows.
For \(k = 1\), the graph \(G_k\) is a cycle of length \(4\) and
we choose any of its edges as the root edge.
For \(k \ge 2\), the graph \(G_k\) is obtained from a cycle
with four edges \(e^1, e^2, e^3, e^4\) by gluing
to each of the edges \(e^1\), \(e^2\) and \(e^3\) a copy of \(G_{k-1}\)
along its root edge, and the root edge \(e_k\) of \(G_k\) is \(e^4\).
See Figure~\ref{fig:G3}.
\begin{figure}[ht!]
\centering
\includegraphics[width=0.5\textwidth]{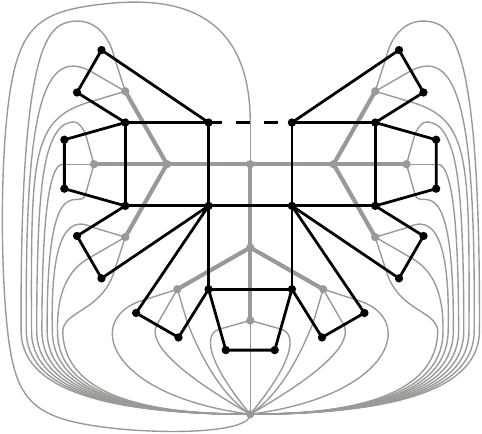}
\caption{The graph \(G_3\) and its plane dual \(G_3^*\).
The root edge \(e_3\) is dashed.}
\label{fig:G3}

\end{figure}

Let \(G_k^*\) denote the plane dual of \(G_k\),
and let \(a_k\) be the vertex of \(G_k^*\) corresponding to the outer face of
\(G_k\).
Observe that \(T_k := G_k^* - a_k\) is a complete ternary tree of height \(k-1\)  rooted at a vertex \(r_k\) 
(here, the \emph{height} of a tree is the maximum number of edges on a
root-to-leaf path), and \(G_k^*\) is the multigraph obtained from \(T_k\)
by adding the vertex \(a_k\), an edge between \(a_k\) and \(r_k\),
and three parallel edges between \(a_k\) and each leaf of \(T_k\).
It is well known that the pathwidth of a complete ternary tree of height $h$ is $h$, thus \(\pw(T_k) = k - 1\).
Since \(T_k \subseteq G_k^*\), this implies \(\pw(G_k^*) \ge k - 1\).
The following lemma strengthens this lower bound.

\begin{lemma}\label{DualLowerBound}
  For every integer \(k \ge 1\), \(\pw(G_k^*) \ge k\).
\end{lemma}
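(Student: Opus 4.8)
We want to show $\pw(G_k^*) \ge k$. We already know $\pw(G_k^*) \ge k-1$ because $T_k \subseteq G_k^*$ and the complete ternary tree of height $k-1$ has pathwidth $k-1$. So the content of the lemma is to rule out $\pw(G_k^*) = k-1$, i.e.\ to push the bound up by one by exploiting the extra structure of $G_k^*$ over $T_k$: the apex vertex $a_k$ adjacent to the root $r_k$ and (via three parallel edges each) to all $3^{k-1}$ leaves. I would prove the statement by induction on $k$, but in a strengthened form that keeps track of where $a_k$ sits in an optimal path-decomposition, since that is exactly the information the recursion needs.

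\medskip
\textbf{Strengthened induction hypothesis.} For $k \ge 2$, the graph $G_k^*$ has a natural recursive description: deleting $r_k$ from $T_k$ leaves three copies of $T_{k-1}$; correspondingly, $G_k^*$ is built from the three ``subtree-duals'' together with the shared apex $a_k$. Concretely, I would like to prove something like: \emph{for every $k\ge 1$ and every path-decomposition $(X_0,\dots,X_s)$ of $G_k^*$ of width $\le k-1$, the apex vertex $a_k$ lies in every bag} — or a slightly weaker ``interval'' version of this. The intuition: in a width-$(k-1)$ decomposition of the ternary tree part $T_k$, there is a bag that is a minimal separator isolating a height-$(k-2)$ subtree needing width $k-1$ on each side, forcing all $k$ bags to be ``full''; but once $a_k$ is forced to be adjacent to vertices on both the left and right ends of the decomposition, $a_k$ must appear in all bags, contradicting width $k-1$ when combined with the internal requirements of $T_k$. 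I would make this precise by the standard fact that in any path-decomposition, if a vertex $v$ has neighbours in $X_0$'s "left part" and $X_s$'s "right part" it spans a long interval, and by the classical analysis of optimal path-decompositions of complete ternary trees (the "pursuit/search number" argument): in a width-$(k-1)$ path-decomposition of $T_k$, the root $r_k$, together with one full copy of $T_{k-1}$ on each side, forces the middle bags to be saturated.

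\medskip
\textbf{Key steps in order.} (1) State and prove a helper lemma about path-decompositions of complete ternary trees: in any path-decomposition of $T_k$ of width exactly $k-1$, there exist indices $i<j$ such that $r_k\in X_i\cap X_j$, the bags $X_i,\dots,X_j$ all have size $k$, and the "left" portion $X_0,\dots,X_i$ and the "right" portion $X_j,\dots,X_s$ each already contain (the vertex set of) a full height-$(k-2)$ subtree hanging off $r_k$. This is the recursive heart and follows from the well-known tight structure of optimal tree path-decompositions. (2) Now suppose for contradiction $\pw(G_k^*)\le k-1$, fix such a decomposition, restrict it to $T_k$ and apply (1). (3) Using that $a_k$ is adjacent to $r_k$ (so $a_k$ appears near the saturated middle) and to leaves of \emph{every} height-$(k-2)$ subtree — in particular to leaves appearing in the left portion and to leaves appearing in the right portion — conclude $a_k$ must appear in some saturated bag $X_\ell$ with $i\le \ell\le j$, making $|X_\ell|\ge k+1$, a contradiction. (The induction is used to guarantee that the copies of $T_{k-1}$ in the left and right portions genuinely contain leaves of $G_k^*$ reachable from $a_k$; for $k=1$ one checks $G_1^*$, a $4$-cycle with a doubled edge or similar small multigraph, directly has pathwidth $\ge 1$, which is trivial since it has an edge.)

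\medskip
\textbf{Main obstacle.} The delicate point is step (1): correctly formulating the structural statement about width-$(k-1)$ path-decompositions of $T_k$ so that it is both true and strong enough to feed the apex argument. One must be careful that "width exactly $k-1$" forces a long run of saturated bags straddling the root with a full subtree already processed on each side — the usual proof that $\pw$ of the height-$h$ ternary tree equals $h$ gives the lower bound via a potential/search-number argument, and I would need to extract from that argument the precise bag-saturation and left/right-subtree-containment statement rather than just the numerical value. Getting the interval bookkeeping right (which bags are saturated, and that $a_k$'s neighbours straddle that interval) is where the real work lies; the rest is routine.
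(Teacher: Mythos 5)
Your proposal takes a genuinely different route from the paper, and it has a real gap at its heart.

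\textbf{Comparison.} The paper does not go through any structural analysis of optimal path-decompositions of complete ternary trees. Instead, it strengthens the statement to $\pw(G_k^* - e_k^*) \ge k$ (where $e_k^*$ is the edge joining $a_k$ to $r_k$) and inducts directly on $k$. The inductive step is short: take a minimum-width, minimum-length path-decomposition $(X_0,\dots,X_s)$ of $G_k^* - e_k^*$, so that $X_0$ and $X_s$ each contain at least two vertices and hence each contains a vertex $\ne a_k$. The graph $G_k^* - r_k$ has three blocks $B_1,B_2,B_3$, each isomorphic to $G_{k-1}^* - e_{k-1}^*$ and pairwise meeting only in $a_k$; pick the block $B_2$ avoiding both chosen endpoint vertices. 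The restriction of the decomposition to $B_2$ has width $\ge k-1$ by induction, giving a bag with $|X_i\cap V(B_2)|\ge k$. Since $G_k^* - V(B_2)$ is connected and hits both $X_0$ and $X_s$, it must hit $X_i$ as well, yielding $|X_i|\ge k+1$. This sidesteps entirely the need for any ``saturated middle straddling the root'' theorem. Your plan, by contrast, takes the known $\pw(T_k)=k-1$ as a black box and tries to add one by exploiting the apex; in principle this can work (the apex does force an extra unit of width), but it requires a fine-grained structure theorem for width-$(k-1)$ decompositions of $T_k$.

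\textbf{The gap.} Your step~(1) --- that in every width-$(k-1)$ path-decomposition of $T_k$ one can find indices $i<j$ with $r_k\in X_i\cap X_j$, all bags $X_i,\dots,X_j$ of size exactly $k$, and a full height-$(k-2)$ subtree of $r_k$ already contained in each of the left and right portions --- is precisely the hard content, and you explicitly leave it unproven. The usual lower-bound argument for the pathwidth of ternary trees (via the recursion ``$\pw(T)\ge h$ if some vertex has three components of pathwidth $\ge h-1$'') delivers the \emph{number} $k-1$, not the detailed bag-saturation and side-containment structure you need; extracting that structure requires additional, non-routine work, and the precise formulation matters (e.g.\ the endpoints $X_i, X_j$ of the saturated run may not coincide with the first and last bags containing $r_k$). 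Without a correct, proved statement here, step~(3) has nothing to stand on. I would suggest either proving a careful version of that lemma, or abandoning the ``$T_k$-plus-apex'' viewpoint in favour of the paper's self-contained induction on $G_k^* - e_k^*$, which avoids the issue altogether.
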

\begin{proof}
  Let \(e_k^*\) denote the edge of \(G_k^*\) between \(a_k\) and \(r_k\)
  which corresponds to the root edge \(e_k\) of \(G_k\).
  The graph \(G_k^* - e_k^*\) consists of the tree \(T_k\) and the vertex \(a_k\)
  attached to each leaf of \(T_k\) with three parallel edges.
  We prove by induction on \(k\) that \(\pw(G_k^* - e_k^*) \ge k\).
  This will imply that \(\pw(G_k^*) \ge k\).
  For \(k = 1\), the graph \(G_k^* - e_k^*\) consists of
  the vertices \(a_k^*\), \(r_k\), and three parallel edges between them,
  so \(\pw(G_k^* - e_k^*) = 1 = k\).
  Now, for the inductive step, suppose that \(k \ge 2\),
  and let \((X_0, \ldots, X_s)\) be a path-decomposition of \(G_k^* - e_k^*\)
  of minimum width, and among all such path-decompositions choose one minimizing \(s\).
  This way, each of the bags \(X_0\) and \(X_s\) contains at least two vertices.
  In particular, there exist vertices \(x_0 \in X_0\) and \(x_s \in X_s\) that are both distinct from \(a_k\).
  The graph \(G_k^* - r_k\), has three blocks \(B_1,B_2,B_3\),
  each of them is isomorphic to \(G_{k-1}^* - e_{k-1}^*\), and each pair of them
  intersects only in \(a_k\).
  Hence, each of the vertices \(x_0\) and \(x_s\) belongs to exactly one
  of the blocks \(B_1,B_2,B_3\).
  Without loss of generality, we assume that \(V(B_2) \cap \{x_0, x_s\} = \emptyset\).

  The graph \(B_2\) is a copy of \(G_{k-1}^* - e_{k-1}^*\) with a path decomposition
  \((X_0 \cap V(B_2), \ldots, X_s \cap V(B_2))\).
  By the induction hypothesis, the width of this path-decomposition is at least \(k - 1\),
  so there exists \(i \in \{0, \ldots, s\}\) such that
  \(|X_i \cap V(B_2)| \ge k \).
  The graph \(G_k^* - V(B_2)\) is a connected subgraph of \(G_k^* - e_k^*\) which intersects
  the bags \(X_0\) and \(X_s\) in the vertices \(x_0\) and \(x_s\), respectively.
  Hence, it must intersect the bag \(X_i\) in some vertex \(x_i\), and therefore,
  \[
   |X_i| \ge |X_i \cap V(B_2)| + |\{x_i\}| \ge k + 1.
  \]
  Therefore, the width of \((X_0, \ldots, X_s)\) is at least \(k\), and consequently,
  \(\pw(G_k^* - e_k^*) \ge k\). This completes the inductive proof.
\end{proof}

\begin{proof}[Proof of \cref{LowerBound}]
Bodlaender and Fomin~\cite{bodlaender2002approximation}
showed that for every \(2\)-connected
outerplane graph \(G\) without loops nor multiple edges,
we have \(\pw(G) \ge \pw(G^*)\), where $G^*$ denotes the plane dual of $G$. 
Hence, by \cref{DualLowerBound}, we have
\[
  \pw(G_k) \ge \pw(G_k^*) \ge k.
\]
The cocircumference of \(G_k\) is the circumference of
its plane dual \(G_k^*\).
Since \(G_k^* - a_k\) is a tree,
every cycle in \(G_k^*\) contains \(a_k\).
Therefore, the circumference of \(G_k^*\) is at most
the length of a longest path in \(T_k\) plus two:
\[
  \cocircumference(G_k) = \circumference(G_k^*) \le 2(k-1) + 2 = 2k.\qedhere
\]
\end{proof}

\section{Matroids}
\label{sec:matroids}

Before moving to matroids, we need to introduce a graph parameter closely related to treewidth, called branchwidth, which is defined as follows. 
A {\em branch decomposition} of a graph $G$ consists of a subcubic tree (an unrooted tree where every internal node has degree at most $3$) and a bijection between the leaves of the tree and the edges of $G$. 
If we remove an edge from the tree, the resulting two subtrees partition the edges of $G$ into two sets $A$ and $B$, and we are interested in the number of vertices of $G$ that appear as endpoints of edges in both sets $A$ and $B$.  
The {\em width} of the decomposition is the maximum of this number of common vertices, over every edge of the tree. 
The {\em branchwidth} of $G$, denoted $\bw(G)$, is the minimum width of a branch decomposition of $G$. 
While treewidth and branchwidth are not always equal, they are closely tied to each other: 
Robertson and Seymour~\cite{RS91} proved that
\begin{equation}
\label{eq:bw_vs_tw}
\bw(G) - 1 \leq \tw(G) \leq \left\lfloor \frac32 \bw(G) \right \rfloor - 1
\end{equation}
for every graph $G$ with $\bw(G) \geq 2$.  

The interest of branchwidth compared to treewidth is that its definition is phrased in terms of edges (on the leaves of the subcubic tree), instead of vertices (in the bags of the tree decomposition).  
This allows for a smooth generalization to matroids, which is as follows. 
Given a matroid $M$ with rank function $r$, a {\em branch decomposition} of $M$ consists of a subcubic tree and a bijection between the leaves of the tree and the elements of $M$.  
If we remove an edge from the tree, the resulting two subtrees partition the elements of $M$ into two sets $A$ and $B$, and this time we are interested in the following measure: $r(A) + r(B) - r(M) +1$. 
The {\em width} of the decomposition is the maximum of this measure over every edge of the tree, and the {\em branchwidth} of $M$, denoted $\bw(M)$, is the minimum width of a branch decomposition of $M$. 

At this point, it might come as a surprise that the branchwidth of graph $G$ is not always equal to the branchwidth of its cycle matroid $M(G)$. 
However, this is essentially the case: Mazoit and Thomass\'e~\cite{MT07} proved that for every graph $G$, 
either 
\begin{equation}
\label{eq:bw}
\bw(G) = \bw(M(G)),
\end{equation}
or $\bw(G) = 2$ and $\bw(M(G))=1$. 
Furthermore, \eqref{eq:bw} holds if $G$ has at least one cycle. 

One key feature of branchwidth, which follows from its definition, is that the branchwidth of a matroid $M$ is equal to the branchwidth of its dual matroid $M^*$: 
\begin{equation}
\label{eq:bw_dual}
\bw(M) = \bw(M^*). 
\end{equation}

Lastly, we note that circumference and cocircumference for matroids are defined as expected: 
The {\em (co)circumference} of a matroid $M$ with at least one (co)circuit is the maximum size of a (co)circuit of $M$. 
Note that the cocircumference of $M$ is equal to the circumference of its dual matroid $M^*$.  
Note also that for a graph $G$, the (co)circumference of $G$ is equal to that of its cycle matroid $M(G)$. 

With these notions in hand, we may now state our first conjecture: 

\begin{conjecture}
\label{conj:bw}
Every matroid $M$ with circumference $k$ has branchwidth $O(k)$. 
\end{conjecture}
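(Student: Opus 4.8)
\section*{Proof proposal for \texorpdfstring{\cref{conj:bw}}{Conjecture}}

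The plan is to build up to general matroids starting from the two cases where \cref{conj:bw} is already within reach, and then to isolate the genuinely new difficulty. First I would reduce to connected, simple matroids: circumference changes by at most a constant under adding loops and parallel elements, simplification does not change branchwidth, the circumference of a direct sum is the maximum of the circumferences of the summands, and the branchwidth of a direct sum is (essentially) the maximum of the branchwidths of the summands. With that in place, the graphic case is immediate: if \(M = M(G)\) with \(G\) having a cycle, then \(\bw(M) = \bw(G)\) by \eqref{eq:bw}, and combining \cref{thm:tw_circ} with \eqref{eq:bw_vs_tw} gives \(\bw(G) \le \tw(G) + 1 \le \circumference(G) = \circumference(M)\); so here one even gets a linear bound with leading constant \(1\). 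Dually, the cographic case follows from \cref{TreewidthVsCocircumference}: for \(M = M^*(G)\) we have \(\bw(M) = \bw(M(G)) = \bw(G)\) by \eqref{eq:bw_dual} and \eqref{eq:bw}, hence \(\bw(M) \le \tw(G) + 1 \le \cocircumference(G) + 1 = \circumference(M) + 1\). Thus \cref{conj:bw} simultaneously generalizes \cref{thm:tw_circ} and \cref{TreewidthVsCocircumference}; I would also keep in mind the uniform matroids \(U_{k-1,n}\), which have circumference \(k\) and branchwidth \(k\), as a test of how tight one can hope to be.

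Next I would attack regular matroids via Seymour's decomposition theorem: every regular matroid is obtained from graphic matroids, cographic matroids, and copies of \(R_{10}\) by \(1\)-, \(2\)-, and \(3\)-sums, and each leaf piece, after deleting or contracting its basepoint elements, is a minor of \(M\). Since circumference is minor-monotone (deletion only removes circuits, and every circuit of a contraction lies inside a circuit of the original matroid), each such minor has circumference at most \(k\), so by the previous paragraph, and since \(R_{10}\) is fixed, each leaf piece has branchwidth \(O(k) + O(b_i)\), where \(b_i\) is its number of basepoints. One then glues the branch decompositions of the leaf pieces along the decomposition tree, using that a \(j\)-sum with \(j \le 3\) is displayed by a separation of order at most \(2\); the quantity to watch is that the order of a separation displayed deep inside one piece, when measured in \(M\) rather than in the piece, is inflated only by the number of basepoints of that piece.

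The \emph{main obstacle} already appears here: the numbers \(b_i\) (equivalently, the degrees of the pieces in the decomposition tree, or the lengths of chains of nontrivial \(2\)- and \(3\)-sums) need not be bounded a priori, so the naive induction only gives \(\bw(M) \le O(k) + O(\max_i b_i)\). I would try to remove this defect using the circumference hypothesis itself: a long chain of \(2\)-sums through pieces each carrying a circuit on its two basepoints glues up into a single long circuit, contradicting \(\circumference(M) \le k\); more generally one should be able to argue that \(\circumference(M) \le k\) forces the decomposition to be ``tame'' in the relevant sense, so that the gluing costs only \(O(k)\) overall. Even if this works, extracting the correct \emph{linear} dependence \(O(k)\), rather than merely a bound depending on \(k\), looks like a further substantial hurdle.

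For general matroids, the representable case can be handled qualitatively through the matroid grid theorem of Geelen, Gerards and Whittle: for each prime power \(q\), a \(GF(q)\)-representable matroid of sufficiently large branchwidth has a minor that is the cycle matroid of a large grid, a large uniform matroid, or a large projective geometry, each of which has large circumference, so minor-monotonicity of circumference bounds its branchwidth in terms of \(k\) (though not with an absolute constant, nor uniformly in \(q\)). The hard part, which I expect to be the crux of the conjecture, is the non-representable case, where there is no grid theorem: one needs a direct argument that does not pass through representability. I see two routes, both difficult. One is an explicit construction of a branch decomposition of width \(O(k)\) in the spirit of Birmelé's depth-first-search proof of \cref{thm:tw_circ} --- this would require a matroid analogue of a DFS tree, perhaps a basis together with a linear order and the associated fundamental circuits, organised so that the width of each displayed separation is visibly controlled by a single circuit of size at most \(k\). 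The other is to invoke the tangle--branchwidth duality for matroids and build a circuit of length \(\Omega(\bw(M))\) directly from a tangle of high order. Making either route yield the linear bound \(O(k)\) in full generality is, I expect, where the real work lies.
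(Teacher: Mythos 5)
The statement you were asked to ``prove'' is labelled a conjecture in the paper, and the paper does not prove it: the authors explicitly write that for general matroids ``the conjecture is wide open.'' What you have written is not a proof but a research plan, and to your credit you say so openly --- you correctly flag that the Seymour-decomposition route for regular matroids needs control over the number of basepoints, that the grid-theorem route gives neither a \(GF(q)\)-uniform nor a linear bound, and that the non-representable case is the crux. Your treatment of the two known cases (graphic via \cref{thm:tw_circ} with \eqref{eq:bw_vs_tw} and \eqref{eq:bw}, cographic via \cref{TreewidthVsCocircumference} with \eqref{eq:bw_dual}) matches the paper's own remarks, and your test case \(U_{k-1,n}\) is a sensible sanity check. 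So there is no gap to point out beyond the one you already acknowledge: nothing in your proposal closes the conjecture, and no closing argument exists in the paper either.

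One concrete thing you missed, which is worth having: the paper notes that an \(O(k^2)\) bound on matroid pathwidth (hence on branchwidth via \eqref{eq:bw_vs_pw}) \emph{is} known for connected matroids, either from Seymour's theorem that contracting a largest circuit in a connected matroid decreases the circumference, or from Corollary~3.16 in~\cite{KKLM17}. This is stronger than the ``qualitative'' bound your grid-theorem route gives and it applies to all matroids, not just representable ones. If I were pursuing your plan, I would start from that \(O(k^2)\) argument and try to tighten it to linear, rather than from Seymour's decomposition of regular matroids --- the contraction argument already sidesteps the basepoint-counting obstacle you identify, and the real difficulty is only the loss of a factor of \(k\) in the recursion.
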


For graphic matroids, Theorem~\ref{thm:tw_circ} combined with \eqref{eq:bw_vs_tw} and \eqref{eq:bw} show that the conjecture is true in this case. 
For cographic matroids, the conjecture is true as well, as follows from \cref{TreewidthVsCocircumference} combined with \eqref{eq:bw_vs_tw}, \eqref{eq:bw}, and \eqref{eq:bw_dual}. 
For general matroids, the conjecture is wide open though.  

Next, we wish to discuss a potential generalization of \cref{thm:td_circ} and \cref{PathwidthVsCocircumfrence}.  
Following~\cite{GGW06}, the pathwidth of a matroid $M$ is defined as follows. 
Given an ordering $e_1, e_2, \dots, e_m$ of the elements of $M$, we consider 
for each $i\in \{1, \dots, m\}$ the measure $r(A) + r(B) - r(M)$ with $A=\{e_1, \dots, e_i\}$ and $B=\{e_{i+1}, \dots, e_m\}$, and take its maximum over $i\in \{1, \dots, m\}$. 
This is the {\em width} of the ordering. 
The {\em pathwidth} of $M$ is the minimum width of an ordering of its elements, and is denoted $\pw(M)$.  
Observe that, except for the $+1$ missing in the width measure above, this definition is almost the same as that of branchwidth with the extra requirement that the subcubic tree be a caterpillar (a path plus some hanging leaves). 
Using this connection, the following upper bound can be shown (see~\cite{kashyap2008matroid} for a proof): 
\begin{equation}
\label{eq:bw_vs_pw}
\bw(M) \leq \pw(M) + 1
\end{equation}
holds for every matroid $M$. 

Also, note that matroid pathwidth is self-dual (as follows from the definition): 
\begin{equation}
\label{eq:pw_dual}
\pw(M) = \pw(M^*) 
\end{equation}
holds for every matroid $M$. 

Finally, given a graph $G$, the pathwidth of its cycle matroid is at most the pathwidth of $G$ plus one.
In fact, Kashyap~\cite{kashyap2008matroid} constructed from any graph \(G\) a multigraph \(\overline{G}\) with \(G \subseteq \overline{G}\)
such that \(\pw(M(\overline{G})) = \pw(G)+1\), so
\begin{equation}
\label{eq:pw_G_vs_pw_M}
\pw(M(G)) \le \pw(G) + 1. 
\end{equation}

In \cref{thm:td_circ} and \cref{PathwidthVsCocircumfrence}, it was essential that the graph $G$ under consideration is $2$-connected. 
Thus, if we hope to bound the pathwidth of a matroid from above by a function of its circumference, we need to add a connectivity requirement. 
Recall that a matroid is {\em connected} if every two elements lie in a common circuit. 
Also, a matroid is connected if and only if its dual is. 
Since a graph is $2$-connected precisely when its cycle matroid is connected, this naturally leads us to the following conjecture.  

\begin{conjecture}
\label{conj:pw}
Every connected matroid $M$ with circumference $k$ has pathwidth $O(k)$. 
\end{conjecture}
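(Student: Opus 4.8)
The plan is to approach Conjecture~\ref{conj:pw} in layers of increasing generality, beginning with the classes where Theorems~\ref{thm:td_circ} and~\ref{PathwidthVsCocircumfrence} already settle the question. At the outset one reduces to connected matroids (which is forced in any case), and records the easy facts that matroid pathwidth is minor-monotone and self-dual, and that the pathwidth and the circumference of a direct sum are the maxima of those parameters over the summands.

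The most natural first target is the class of regular matroids, where Seymour's decomposition theorem is available: every connected regular matroid can be assembled, in a tree-like fashion using $2$-sums and $3$-sums, from graphic matroids, cographic matroids, and copies of $R_{10}$. The base cases are in hand. If $M = M(G)$ is connected and graphic, then $G$ may be taken $2$-connected, and Theorem~\ref{thm:td_circ} together with \eqref{eq:pw_G_vs_pw_M} gives $\pw(M) \le \pw(G) + 1 \le k$; dually, if $M = M(G)^*$ is connected and cographic, then \eqref{eq:pw_dual}, \eqref{eq:pw_G_vs_pw_M} and Theorem~\ref{PathwidthVsCocircumfrence} give $\pw(M) = \pw(M(G)) \le \pw(G) + 1 \le 3k - 1$, since the circumference of $M$ is the cocircumference of $G$ and $G$ is $2$-connected; and $R_{10}$ has only ten elements, so its pathwidth is bounded absolutely. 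What remains is to propagate the bound through the $2$-sums and $3$-sums of the decomposition tree, and the crucial point here --- mirroring how cycles combine across $2$-sums in Theorem~\ref{thm:td_circ}, and how bonds combine across separations in Lemma~\ref{PathwidthVsCocircumfrenceHelper} --- is that circumference is \emph{super-additive} along the decomposition: a circuit of a $2$-sum $M_1 \oplus_2 M_2$ is either a circuit of one side or the union, with the basepoint deleted, of a circuit of $M_1$ through the basepoint and one of $M_2$ through the basepoint, and similarly, up to $O(1)$ corrections, for $3$-sums. Thus a circuit of $M$ running through a path of pieces in the decomposition tree simultaneously witnesses a circuit in each of those pieces, of total length (up to constants) the sum of theirs. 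One should therefore be able to prove, by induction on the decomposition tree, a localised statement in the spirit of Lemma~\ref{PathwidthVsCocircumfrenceHelper}: inside any piece, with one or two basepoint elements pinned to the ends of the ordering, the pathwidth is at most a constant times the length of a circuit through that piece. The amalgamation steps would be matroid $2$-sum and $3$-sum analogues of Lemmas~\ref{PathwidthOneToOne1Sum} and~\ref{PathwidthOneToMany1Sum}; the latter is what prevents the constant from degrading when a node of the decomposition tree has many children, since all but one child is absorbed into a reused ``trunk'' at an additive $O(1)$ cost.

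For matroids that are not regular there is no Seymour-type decomposition, and this is where I expect the real difficulty to lie. One conceivable route is through branchwidth: granting Conjecture~\ref{conj:bw} for connected matroids, one would be handed a branch-decomposition of width $O(k)$ --- a subcubic tree with the elements of $M$ at its leaves and every displayed separation of order $O(k)$ --- and the remaining task would be to \emph{linearise} it into an ordering of $E(M)$ at only a constant-factor loss in width. The sole obstruction to such a linearisation is a deep, balanced-binary-tree-like pattern in the decomposition, and the role of the circumference hypothesis should be precisely to forbid it; this is the matroid incarnation of the mechanism behind Theorem~\ref{thm:td_circ}, where $2$-connectivity rules out the deep ternary-tree pattern that a high-pathwidth, low-circumference graph would otherwise require. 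Carrying this out in the abstract matroid setting, with no vertex set on which to anchor the separations, appears to be the essential new ingredient; restricting to $\mathrm{GF}(q)$-representable matroids and invoking matroid minors structure theory is a possible alternative, but it still requires the bounded-branchwidth input and so does not sidestep Conjecture~\ref{conj:bw}. In summary, the programme is: reduce to connected matroids; settle the regular case via Seymour's decomposition, the base cases above, super-additivity of circumference, and $2$-/$3$-sum analogues of Lemmas~\ref{PathwidthOneToOne1Sum} and~\ref{PathwidthOneToMany1Sum}; and, for the general case, first establish Conjecture~\ref{conj:bw} for connected matroids and then devise a circumference-driven linearisation of branch-decompositions --- this last step being the main obstacle, and at present wide open.
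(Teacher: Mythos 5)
The statement you are addressing is a \emph{conjecture} in the paper, not a theorem; the authors do not prove it, and it remains open. Your write-up correctly recognises this, and the parts of your programme that are actually carried out match the paper's own discussion: the graphic case follows from Theorem~\ref{thm:td_circ} and \eqref{eq:pw_G_vs_pw_M}, giving $\pw(M)\le k$, and the cographic case follows from Theorem~\ref{PathwidthVsCocircumfrence} together with \eqref{eq:pw_dual} and \eqref{eq:pw_G_vs_pw_M}, giving $\pw(M)\le 3k-1$. These are exactly the two instances the paper verifies.

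The new content you propose --- settling the regular case via Seymour's decomposition theorem --- is not claimed in the paper and would, if carried out, be a genuine advance. But as written it is a research sketch, not a proof: you would still need to formulate and prove matroid $2$-sum and $3$-sum analogues of Lemmas~\ref{PathwidthOneToOne1Sum} and~\ref{PathwidthOneToMany1Sum}, including the ``pinned-basepoint'' inductive invariant, and to check that the constants do not degrade along deep decomposition trees with high-degree nodes (this is precisely where the absorption trick of Lemma~\ref{PathwidthOneToMany1Sum} is delicate and its matroid analogue is not obvious). The super-additivity of circumference across $2$-sums that you invoke is correct in spirit, but the bookkeeping that each piece's circumference stays $O(k)$ while its pathwidth is controlled needs to be established, not assumed. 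Similarly, your route for the general case --- derive a path-decomposition from a branch-decomposition of width $O(k)$ granted Conjecture~\ref{conj:bw} --- is heading against the implication direction: the paper observes that \cref{conj:pw} implies \cref{conj:bw}, so the linearisation step you identify as the obstacle is in fact carrying the entire additional strength, and you correctly flag it as wide open.

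One omission worth noting: the paper records that a weaker $O(k^2)$ bound on pathwidth \emph{is} known for general connected matroids, either via a theorem of Seymour (contracting a largest circuit in a connected matroid decreases the circumference by at least one; see \cite{DOO95}) or via Corollary~3.16 in \cite{KKLM17}. Your proposal does not mention this, and it is the one piece of unconditional progress on the general conjecture that currently exists; any serious attack on the linear bound should at least reproduce it.
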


Using \eqref{eq:bw_vs_pw}, it is not difficult to show that \cref{conj:pw} in fact implies \cref{conj:bw}, this second conjecture is thus stronger. 
\cref{conj:pw} is true for graphic matroids, as follows from \cref{thm:td_circ} and \eqref{eq:pw_G_vs_pw_M}. 
The conjecture is also true for cographic matroids, as follows from \cref{PathwidthVsCocircumfrence} combined with  \eqref{eq:pw_dual} and \eqref{eq:pw_G_vs_pw_M}.  
We are not aware of other cases where \cref{conj:pw} is known to be true. 

We note that, for general matroids, it is possible to show a $O(k^2)$ bound 
for \cref{conj:pw}, and thus also for \cref{conj:bw}, by using a theorem of Seymour (see~\cite{DOO95}), stating that contracting a largest circuit in a connected matroid decreases its circumference by at least $1$. 
Alternatively, a quadratic upper bound can also be derived from Corollary~3.16 in~\cite{KKLM17} (see also Theorem~9 in \cite{BrianskiKKPS22} and the discussion around it). 

\section*{Acknowledgments}

We thank the two anonymous referees for their helpful remarks on a previous version of the manuscript. 

\printbibliography
\end{document}